\newtheorem{thm}{Theorem}[section]
\newtheorem{mthm}{Main Theorem}[section]
\newtheorem{prop}{Proposition}[section]
\newtheorem{cor}[thm]{Corollary}
\newtheorem{lemma}{Lemma}[section]
\theoremstyle{definition}
\newtheorem{defn}[thm]{Definition}
\theoremstyle{definition}
\newtheorem*{rem}{Remark}
\theoremstyle{definition}
\newcommand{\real}{\mathbb{R}}
\numberwithin{equation}{section}
\begin{document}


\title[Symmetric Periodic Orbit in the Octahedral Six-Body Problem]{An Existence Proof of a Symmetric Periodic Orbit in the Octahedral Six-Body Problem}
\author{Anete S. Cavalcanti$^1$
\\}
\address{$^1$Universidade Federal Rural de Pernambuco\\
     Depto de Matemática\\
     Rua Dom Manoel de Medeiros, sn, Recife, PE\\
     52171-900, Brasil\\
     Phone: 55 81 3320-6486.\\ \textbf{CAPES' scholarship}}
\email{anete.soares@ufrpe.br;anete.soares.cavalcanti@gmail.com}     

\subjclass[2010]{70F10, 37N05, 70Fxx.} 
\keywords{Celestial mechanics, octahedral six body problem, periodic solutions, symetric orbits}

\begin{abstract}
We present a proof of the existence of a periodic orbit for the Newtonian six-body problem with equal masses. This orbit has three double collisions each period and no multiple collisions. Our proof is based on the minimization of the Lagrangian action functional on a well chosen class of symmetric loops.

\end{abstract}
\maketitle

\section{Introduction}
The \textit{Variational Methods} applied to the n-body Newtonian problem allows to prove the existence of periodic  orbits, in most cases with some symmetry. It was exploited by the Italian school in the 90's ~\cite{Coti Zelati}, ~\cite{DGM},~\cite{SeT}. They gave new periodic solutions for a mechanical systems with potentials satisfying a hypothesis called \textit{strong force}, which excludes the Newtonian potential. The \textit{strong force hypothesis} was introduced by Poincaré, see ~\cite{Poincare}. This method was only exploited for the Newtonian potential much later. We can cite three works in this area ~\cite{CM}, ~\cite{Venturelli} and ~\cite{Shibayama}. In the first one it is proved the existence of a new periodic solution in the planar three-body problem, where all bodies are on the same curve, and this curve has a figure-eight shape; in the second one (our most prominent reference for the present paper) it is proved the existence of the collinear solution for the three-body problem already discovery numerically in $1956$ by J. Schubart ~\cite{Schubart}); in the third one, the author studies the existence of periodic solutions for subsytems where the dimension of the configuration space can be reduced to $d=2$. In this paper we proved a variational existence proof of a periodic orbit in the \emph{octahedral six-body problem} with equal masses. Next we explain the main ideas on the prove.

Let us consider six bodies with equal masses in $\real^3$. We assume that every coordinate axis contains a couple of bodies, and they are symmetric with respect to the origin, that is the center of mass of the system. This is the octahedral six-body problem. Our orbit starts with a collision in the $x-$axis and the other $4$ bodies form a square on the ortogonal plane. Let us denote by $T$ the period of the solution. During the  \textit{first sixth} of the period, the bodies on the $x-$axis move away, the two bodies on the $z-$axis also move away, while the bodies on $y-$axis approach. At time $t=T/6$ the bodies on the $z-$axis are  point and they approach each other as  $t\in [T/6,T/3]$. At time $t=T/3$ the bodies on $y-$axis are at a double collision, and the other bodies form a square on the orthogonal plane. In the second third of the period, the motion is the same as above, after having exchanged $x$ by $y$, $y$ by $z$ and $z$ by $x$. That is to say the solution satisfies the symmetry condition $x(t-T/3)=y(t)$, $y(t-T/3)=z(t)$ and $z(t-T/3)=x(t)$. See figure $1$

	\begin{figure}[h] 
\begin{center}
\scalebox{.09}{\includegraphics{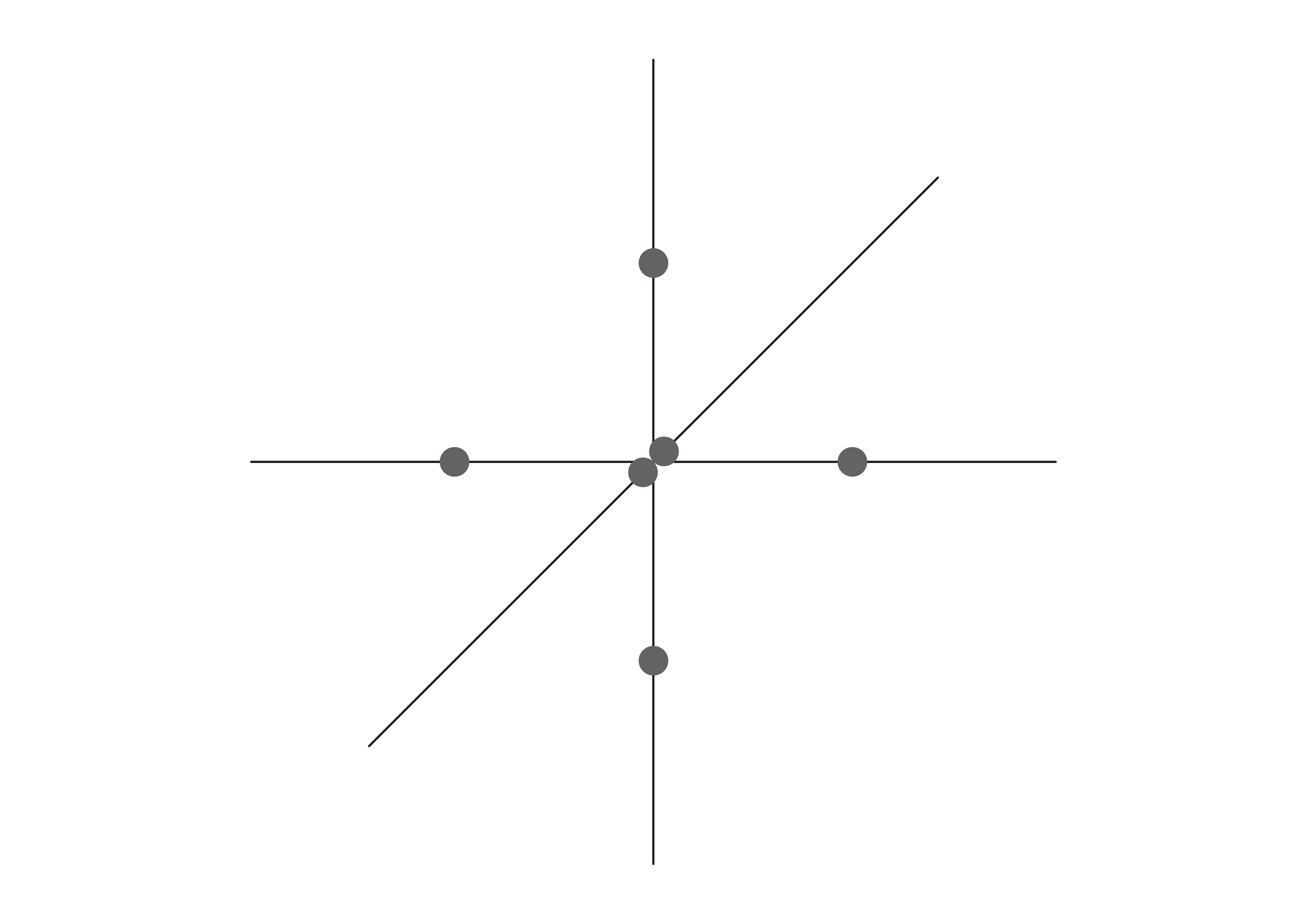}} \scalebox{.09}{\includegraphics{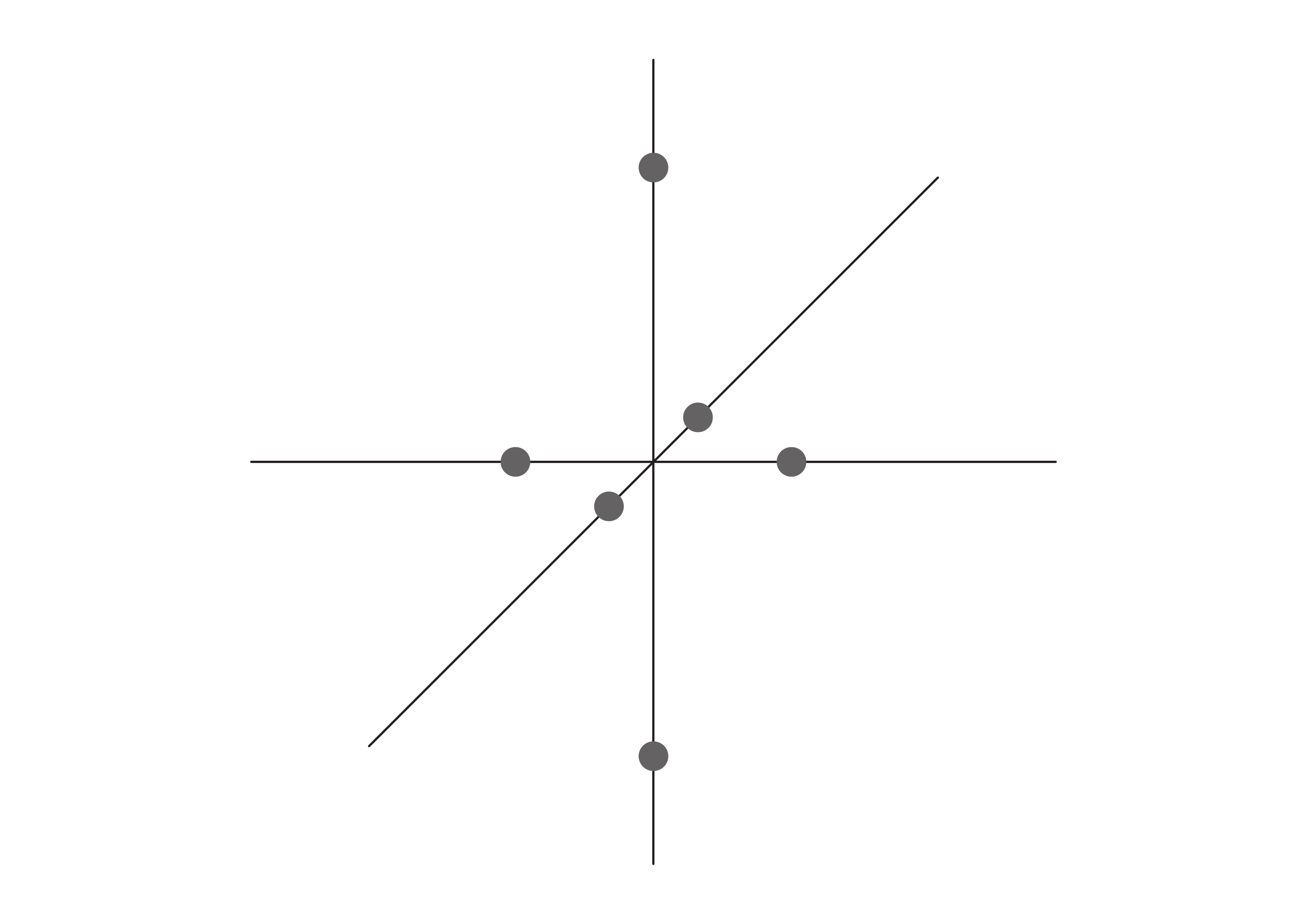}} \scalebox{.09}{\includegraphics{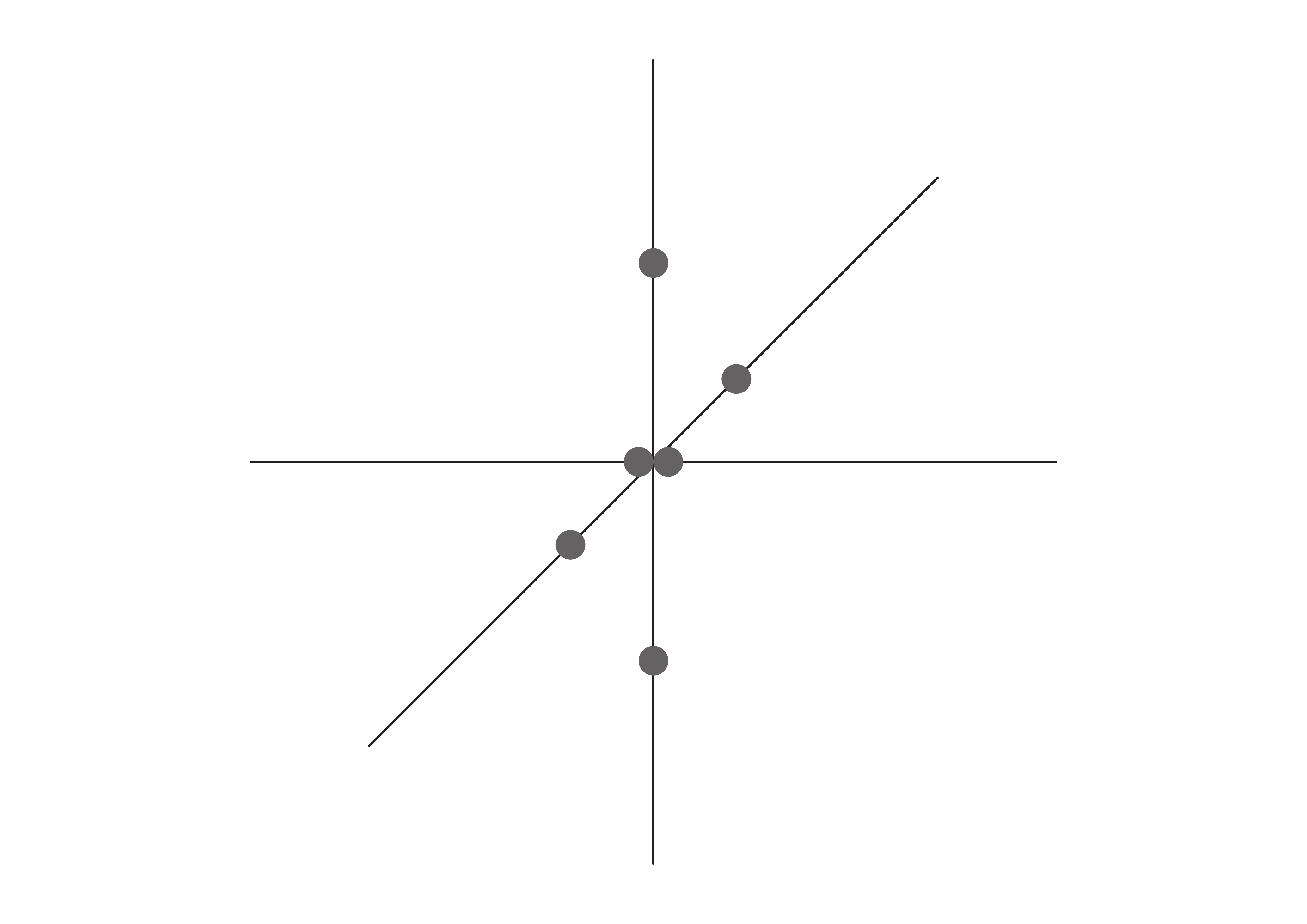}} 
\caption{A sketch of the first sixth of the orbit.}
\label{f1}
\end{center}
\end{figure}

In section 2 we will introduce the formal aspects of the problem, we will write the equations of motion and explain the variational setting. In particular, we will prove that the lagrangian action functional is \textit{coercive}. In section 3 we show that a minimizer has no other collisions besides those imposed by the choice of set of loops where we minimize the action funcional, and that all collisions are double (\emph{i.e.} there are no quadruple collision). In section 4 we regularize all possible collisions.  Our conclusions are summarized in the Main Theorem, which is proved in section 4.

\section{Equations of Motion and Variational Setting}

Notice that if at a certain instant the configuration of the bodies is octahedral (and the velocities too), the configuration is octahedral for all time. To describe the octahedral six-body problem with equal masses we will identify the configuration space with $\real^3$. Let the masses $m_1,m_2$ be on $x$-axis, at positions $q_1=(x,0,0)$ and $q_2=(-x,0,0)$, respectively . Analogously, $m_3,m_4$ are on $q_3=(0,y,0), q_4=(0,-y,0)$ and $m_5,m_6$ on $q_5=(0,0,z),$ $q_6=(0,0,-z)$. 

We can identify the configuration space $\mathcal{X}$ with $\real^3$, and every configuration with a vector $X=(x,y,z)\in\real^3=\mathcal{X}$.

Without loss of generality we can normalize the masses, $m_i=\frac{1}{2}, i=1,...,6$. So the kinetic energy and potential function assume the form:
$$K(X,\dot{X})=\frac{1}{2}(\dot{x}^2+\dot{y}^2+\dot{z}^2),$$
$$U(X)=\left(\frac{1}{\sqrt{x^2+y^2}}+\frac{1}{\sqrt{x^2+z^2}}+\frac{1}{\sqrt{y^2+z^2}}\right)+\frac{1}{8}\left(\frac{1}{x}+\frac{1}{y}+\frac{1}{z}\right).$$

A configuration is said to be a collision if at least two bodies occupy the same position in space. Due to the symmetry of the configuration, when this happens, one of the three Cartesian coordinate is equal to zero. We denote by $\Delta$ the set of collision configurations, and by $\hat{\mathcal{X}}=\mathcal{X}\setminus\Delta$ the open set of collision-free configurations. A configuration $q=(x,y,z)$ is said to be a \textit{double collision} if only one of the three Cartesian coordinates is equal to zero. It is said that to be a \textit{quadruple collision} if only two of the three Cartesian coordinates are equal to zero, and it is said to be a total collision if $x=y=z=0$. One can see that the potential becomes singular at collisions. Therefore the Hamiltonian ($H=K-U$) and the Lagrangian ($L=K+U$) beccome sigular at the collisions. We will prove in section $\ref{collisions}$ that there is no quadruple and total collision. In section 4 we will show that for an action minimizer the double collisions are regularize. The equations of motion are Euler-Lagrange equations for the lagrangean $L=K+U$. They can be written:

\begin{equation} \label{eq movi q}
\left\{ \begin{array}{c}
\ddot{x}=\frac{\partial U}{\partial x}\\
\ddot{y}=\frac{\partial U}{\partial y}\\
\ddot{z}=\frac{\partial U}{\partial z}.
\end{array}\right.
\end{equation}
Throughout the paper, $X\cdot Y$ will denote the standard scalar product in $\real^3$.

\begin{mthm}\label{main theorem}There is a $T$-periodic solution $X=(x,y,z):\real\rightarrow\real^3$ for the octahedral 6-body problem with equal masses such that: \label{Main Theorem}
\begin{itemize}
\item[(i)] $x(0)=0$, $y(0)>0$ and $z(0)>0$;
\item[(ii)] $t\mapsto x(t)$ and $t\mapsto z(t)$ are increasing functions and $t\mapsto y(t)$ is decreasing in $[0, T/6]$. While, in $[T/6,T/3]$, $t\mapsto z(t)$ becomes decreasing, $t\mapsto x(t)$ is still increasing and $t\mapsto y(t)$ is still decreasing;
\item[(iii)] $X$ verifies some symmetry conditions:
$$\left\{\begin{aligned} 
x(t-T/3)=y(t)  \\
y(t-T/3)=z(t)\\
z(t-T/3)=x(t) 
\end{aligned},
\right.\quad\quad\left\{\begin{aligned} 
x(-t)=x(t)  \\
y(-t)=z(t)\\
z(-t)=y(t) 
\end{aligned}
\right..$$
\item[(iv)] the collisions in this orbit are double collisions at $t=0,T/3,2T/3, etc.$. All collisions are regularized. 
\end{itemize}
\end{mthm}
Notice that if we assume that the orbit starts with a double collision on the $x$-axis, \textit{i.e.} $x(0)=0$, condition (iii) implies $y(T/3)=z(2T/3)=0$, $y(0)=z(0)$ and $\dot{z}(T/6)=\dot{x}(3T/6)=\dot{y}(5T/6)=0$.

Recall the Sobolev space $H^1(\real/{T\mathbb{Z}},\mathcal{X})$ is the vector space of absolutely continuous $T$-periodic loops $X:\real\rightarrow \mathcal{X}$ satisfying
$$\int_0^T\left\|\dot{X}(t)\right\|^2dt<+\infty.$$
$H^1(\real/{T\mathbb{Z}},\mathcal{X})$ is endowed with the following Hilbert product:
$$<X,Y>=\int_0^T(\dot{X}(t)\cdot\dot{Y}(t)+X(t)\cdot Y(t))dt, \quad\quad X,Y\in H^1(\real/{T\mathbb{Z}},\mathcal{X}).$$

and $\left\|\cdot\right\|_{H^1}$ denote the associated norm. The \textit{Lagrangian action functional} is defined by
$$\mathcal{A}:H^1(\real/{T\mathbb{Z}},\mathcal{X})\rightarrow \real_+\cup\{+\infty\}, \quad\quad \mathcal{A}(X)=\int_0^T L(X(t),\dot{X}(t))dt,$$
where $L(X(t),\dot{X}(t))=+\infty$ if $X$ is a collision configuration. The Lagrangian action functional is differentiable on the open set of collision-free orbits. A classical computation shows that collision-free critical points of $\mathcal{A}$ are $T$-periodic solutions of the problem (see ~\cite{Venturelli these}). The action functional is also semicontinuous with respect to the weak topology of the space $H^1(\real/{T\mathbb{Z}},\mathcal{X})$ (see ~\cite{Venturelli these}). 

Let $G$ be a group with two generators which satirfy the relations:
$$g^3=h^2=1, \quad\quad \text{and}\quad (hg)^2=1,$$
$G$ is in fact the diedral group $D_3$. We consider the action of $G$ on $H^1(\real/{T\mathbb{Z}},\mathcal{X})$ defined by
$$\left\{\begin{aligned} 
g(x,y,z)(t)=(z,x,y)(t-T/3),  \\
h(x,y,z)(t)=(x,z,y)(-t). 
\end{aligned}
\right.$$

So, it defines an action of $G$ on $H^1(\real/{T\mathbb{Z}},\mathcal{X})$. Since we assume that all masses are equal, it is easy to see that $G$ acts by isometries on $H^1(\real/{T\mathbb{Z}},\mathcal{X})$ and leaves invariant the action functional. Notice that, since we identify $\mathcal{X}$ with $\real^3$, the invariance of a loop $X\in H^1(\real/T\mathbb{Z},\mathcal{X})$ by $g$ can be interpreted by saving that $X(t+T/3)$ is obtained by a rotaion $X(t)$ of an angle $2\pi/3$ around the axis generated by the vector  $(1,1,1)$; the invariance by $h$ is equivalent to says that $X(-t)$ is symmetric to $X(t)$ with respect to the plan $y=z$. Let $\Lambda_G=\{X\in H^1(\real/{T\mathbb{Z}},\mathcal{X}): gX=X \text{ and } hX=X\}$, the subspace of invariant loops. Acoording to Palais principle (see \cite{Palais}), a critical point of $\left.\mathcal{A}\right|_{\Lambda_G}$ is still a critical point of $\mathcal{A}$, thus it is a $T$-periodic solution of the problem. Remark that since invariant loops $X\in\Lambda_G$, have $X(T/3-T)$ and $X(t)$ symmetric with respect to the plane $x=y$, they satisfy statement (iii) of the Main Theorem. In order to use the \textit{Direct Method in the Calculus of Variations}, see ~\cite{Venturelli these}, we need to restrict the action functional to a closed subset  $\Omega$ of $\Lambda_G$, such that $\left.\mathcal{A}\right|_\Omega$ is \textit{coercive}, \textit{i.e.}, the set of loops verifying $\left.\mathcal{A}\right|_{\Omega}(X)<C$ is bounded for every $C>0$. Notice that $\left.\mathcal{A}\right|_{\Lambda_G}$ is not coercive. Indeed, we can construct a sequence of constant loops, for instance $X^{(n)}(t)=(n, n, n)$, such that $X^{(n)}\in\Lambda_G$ and
$$\mathcal{A}(X^{(n)})\rightarrow 0 \quad\quad \text{as} \quad\quad n\rightarrow +\infty.$$
Therefore, since $\left.\mathcal{A}\right|_{\Lambda_G}(X)$ is everywhere strictly positive, its infimum is not achieved. Let
$$S=\{ (x,y,z)\in\mathcal{X}:x\geq 0,\quad y\geq 0,\quad z\geq 0\},$$
we will define $\Omega$ as
$$\Omega=\{X=(x(t),y(t),z(t))\in\Lambda_G: x(0)=0 \quad\quad \text{and} \quad\quad X(t)\in S\quad \forall \quad t\in\real\}.$$

\begin{figure}
\begin{center}
\scalebox{0.35}{\includegraphics{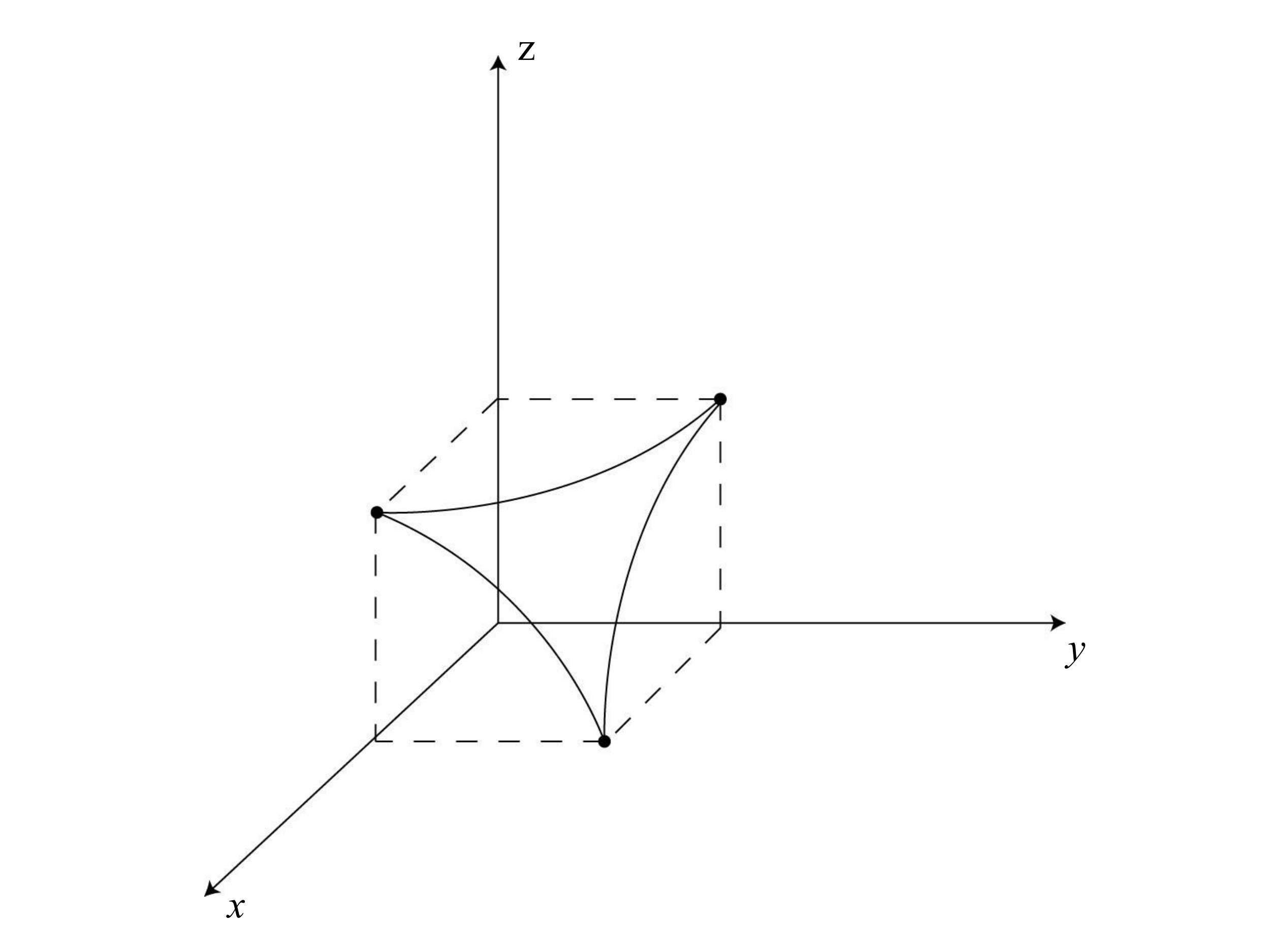}}
\caption{Orbit in the configuration space.}
\label{f2}
\end{center}
\end{figure}

\begin{prop} $\left.\mathcal{A}\right|_\Omega$ is coercive and has a minimizer.
\end{prop}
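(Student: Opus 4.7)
The plan is to apply the direct method in the calculus of variations: establish coercivity of $\left.\mathcal{A}\right|_\Omega$, then extract a weakly convergent subsequence of a minimizing sequence and invoke the weak lower semicontinuity of $\mathcal{A}$ already recalled in the previous paragraph.

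For coercivity, the first step is to observe that on $\Omega$ the three Cartesian coordinates are nonnegative, so every term of $U$ is nonnegative and hence
$$\mathcal{A}(X)\;\geq\;\int_0^T K(X,\dot X)\,dt\;=\;\tfrac{1}{2}\|\dot X\|_{L^2}^2.$$
Therefore $\|\dot X\|_{L^2}$ is bounded on every sublevel set $\{\mathcal{A}\leq C\}$. The second step is to upgrade this to a bound on $\|X\|_{H^1}$, and here $G$-invariance does the essential work: evaluating the relation $x(t-T/3)=y(t)$ at $t=T/3$ together with $x(0)=0$ gives $y(T/3)=0$, and the relation $z(t-T/3)=x(t)$ at $t=0$ gives $z(2T/3)=0$. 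Thus each component of $X$ vanishes somewhere in $[0,T]$, and for any absolutely continuous $u$ with $u(t_u)=0$ the Cauchy--Schwarz inequality gives $\|u\|_{L^\infty}\leq\sqrt{T}\,\|\dot u\|_{L^2}$. Applied componentwise, this yields $\|X\|_{L^\infty}\leq\sqrt{T}\,\|\dot X\|_{L^2}$, whence $\|X\|_{H^1}^2\leq(T^2+1)\,\|\dot X\|_{L^2}^2\leq 2(T^2+1)\,\mathcal{A}(X)$, proving coercivity. This step also explains why one cannot dispense with the constraint $x(0)=0$: without a zero of some coordinate, the constant sequence $X^{(n)}(t)=(n,n,n)$ mentioned earlier immediately kills coercivity.

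For the existence of a minimizer, I would first note that $\inf_\Omega\mathcal{A}<+\infty$, since one can exhibit an admissible smooth loop approaching each forced double collision with Keplerian-type asymptotics $u(t)\sim|t-t_0|^{2/3}$, which makes both $\int K\,dt$ and $\int U\,dt$ integrable near the collisions. Take a minimizing sequence $(X^{(n)})\subset\Omega$; by coercivity it is bounded in $H^1(\real/T\mathbb{Z},\mathcal{X})$ and hence, up to a subsequence, $X^{(n)}\rightharpoonup X^\star$ weakly in $H^1$. Since in dimension one the embedding $H^1\hookrightarrow C^0$ is compact (Rellich), the convergence is in fact uniform. All defining conditions of $\Omega$ are preserved under uniform convergence (the equality $x(0)=0$ is pointwise, $S$ is closed, and $G$ acts continuously on $C^0$), so $X^\star\in\Omega$. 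The weak lower semicontinuity of $\mathcal{A}$ then yields $\mathcal{A}(X^\star)\leq\liminf_n\mathcal{A}(X^{(n)})=\inf_\Omega\mathcal{A}$, so $X^\star$ is the desired minimizer.

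The only non-routine step is the symmetry trick at the heart of the coercivity argument; once the three coordinate zeros are identified, everything else is an application of standard machinery from the direct method together with the compactness of $H^1\hookrightarrow C^0$ in one dimension and the semicontinuity property already recorded in the excerpt.
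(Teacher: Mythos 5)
Your proposal is correct, and the coercivity half takes a genuinely different route from the paper's. The paper argues geometrically in the configuration space: it bounds the length of the arc $X|_{[0,T/3]}$ from below by $a\|X(\overline{t})\|$ (where $\overline{t}$ maximizes $\|X(t)\|$), using the fact that $X(0)$ and $X(T/3)$ lie on two fixed half-lines --- the bisectrices of the first quadrants of the planes $x=0$ and $y=0$ --- separated by an angle $\pi/3$, and then converts this length bound into a kinetic-energy bound via Cauchy--Schwarz, with $a=\min_{\alpha\in[0,\pi/3]}\{\sin\alpha+\sin(\pi/3-\alpha)\}$. You instead exploit the zeros forced on each individual coordinate by the constraints ($x(0)=0$ together with $gX=X$ gives $y(T/3)=z(2T/3)=0$) and apply the elementary estimate $\|u\|_{L^\infty}\le\sqrt{T}\,\|\dot u\|_{L^2}$ componentwise; this is shorter and dispenses with the trigonometric minimization, at the cost of relying on the coordinate-by-coordinate structure rather than only on the angular separation of the boundary configurations. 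For the existence half the two arguments are essentially the same direct method: the paper phrases it as a nested intersection of weakly compact sublevel sets $\Omega_n$, while you extract a weakly convergent minimizing subsequence; your version is in fact slightly more careful on a point the paper glosses over, namely that $\Omega_n=\{X\in\Omega:\ \mathcal{A}(X)\le c_n\}$ is weakly closed only because $\Omega$ itself is, which you justify via the compact embedding $H^1\hookrightarrow C^0$ and the stability of the defining constraints under uniform convergence. Your observation that $\inf_\Omega\mathcal{A}<+\infty$ (via a test loop with $|t-t_0|^{2/3}$ asymptotics at the imposed collisions) is a sensible addition that the paper omits, though it is not strictly required by the literal statement.
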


\begin{proof}
Consider $X\in\Omega$, by symmetry we will study only the first third of the orbits. Let $X(\overline{t})$ the maximum of $\left\|X(t)\right\|$ with $t\in[0,T/3]$. In order to prove the coercivity we will check that $\mathcal{L}(X)\geq a \left\|X(\overline{t})\right\|$, where $\mathcal{L}$ is the length of the curve $\left.X\right|_{[0,T/3]}$ and $a$ is a positive real number  independent of $X$. If $X(\overline{t})=0$ the inequality, $\mathcal{L}(X)\geq a \left\|X(\overline{t})\right\|$, is trivially satisfied, so we can suppose $X(\overline{t})\neq0$. Let us write $l=\sqrt{y^2(0)+z^2(0)}=\sqrt{x^2(T/3)+z^2(T/3)}$ and $d=\left\|X(\overline{t})\right\|$. Let $\alpha$ be the angle between $X(\overline{t})$ and the bissectrix of the first quadrant of the plane $x=0$; let  $\beta$ be  the angle between $X(\overline{t})$ and the bissectrix of the first quadrant of the plane $y=0$;  and let $\theta$ be the angle between $X(0)$ and $X(T/3)$, so $\theta=\pi/3$. Since $X\in\Omega$, by definition of $\Omega$ and $\Lambda_G$, we know that $X(0)$ is in the bissectrix of the first quadrant of the plane $x=0$, and $X(T/3)$ is in the bissectrix of the first quadrant of the plane $y=0$. The distance between $X(0)$ and $X(\overline{t})$ is $\sqrt{l^2+d^2-2ld\cos\alpha}$, and the distance between $X(\overline{t})$ and $X(T/3)$ is $\sqrt{l^2+d^2-2ld\cos\beta}$. Hence:

$$\begin{aligned}
\mathcal{L} & \geq\sqrt{l^2+d^2-2ld\cos\alpha}+\sqrt{l^2+d^2-2ld\cos\beta},\\
\end{aligned}
$$
Since the two terms on the right-hand side of the inequality above are decreasing with  respect to $\alpha$ and $\beta$, and $\alpha+\beta\geq\theta=\pi/3$, we can write
$$\mathcal{L}\geq \sqrt{l^2+d^2-2ld\cos\alpha}+\sqrt{l^2+d^2-2ld\cos(\pi/3-\alpha)}.$$ 
Notice that $\sqrt{l^2+d^2-2ld\cos\alpha}$ is greater than or equal to the distance from $X(\overline{t})$ to the bissectrix of first quadrant of the plane $x=0$, which is $d\sin\alpha$. In a similar way we have that 
$$\sqrt{l^2+d^2-2ld\cos\left(\frac{\pi}{3}-\alpha\right)}\geq d\sin\left(\frac{\pi}{3}-\alpha\right) .$$ So

\begin{equation} \label{comprimento}
\mathcal{L}\geq d(\sin\alpha+\sin\left(\frac{\pi}{3}-\alpha\right))\geq a||X(\overline{t})||,
\end{equation}
where 
$$a=\text{min}_{\alpha\in[0,\pi/3]}\{\sin\alpha+\sin(\pi/3-\alpha)\}.$$
Given $C>0$, let $\Omega_C=\{X\in\Omega : \mathcal{A}(X)\leq C\}$. Let us prove that this set is bounded, this will imply that $\left.\mathcal{A}\right|_{\Omega}$ is coercive. By the Cauchy-Schwartz inequality, $(\ref{comprimento})$ and the invariance of $\mathcal{A}$ by the action of $G$, we have:
$$\begin{array}{ccc}
\int_0^TK(X,\dot{X})dt & \geq\frac{9}{2T}\left(\int_0^{\frac{T}{3}}\left\|\dot{X}(t)\right\|dt\right)^2 & \geq\frac{9}{2T} a^2\text{max}_{t\in[0,T/3]}\left\|X(t)\right\|^2\\

 & \geq\frac{9}{2T^2} a^2\int_0^{T}\left\|X(t)\right\|^2dt . & 
\end{array}$$
On the other hand, if $X\in\Omega_C$ $$\int_0^T\left\|\dot{X}(t)\right\|^2dt=\int_0^T2K(X,\dot{X})dt\leq2\mathcal{A}(X)\leq 2C,$$
so  $$\left\|X\right\|_{H^1}^2=\int_0^T\left(\left\|X(t)\right\|^2+\left\|\dot{X}(t)\right\|^2\right)dt\leq 2C\left(\frac{2T^2}{9a^2}+1\right).$$ 

In order to prove that $\mathcal{A}|_\Omega$ has a minimizer, let $(c_n)_{n\in\mathbb{N}}$ be a decreasing sequence of real positive numbers converging to inf$\mathcal{A}|_\Omega$. By the last inequality and the weak lower semicontinuity of $\mathcal{A}_{\Omega}$ every set
$$\Omega_n=\{X\in\Omega,\quad \mathcal{A}(X)\leq c_n\}$$
is bounded in the norm of $H^1$ and closed in the weak topology. Since $H^1(\real/T\mathbb{Z},\mathcal{X})$ is a separable Hilbert space, $\Omega_n$ is compact in the weak topology ~\cite{Brezis}. Since the sequence $(c_n)_{n\in\mathbb{N}}$ is decreasing we have $\Omega_{n+1}\subset\Omega_n$, therefore $$\cap_{n\in\mathbb{N}}\Omega_n\neq\emptyset.$$  Notice that any element of $\cap_{n\in\mathbb{N}}\Omega_n$ is a minimizer. 

\end{proof}

\section{Elimination of Extra Collisions}
\label{collisions}

In this section we will study collisions. We will prove that for a minimizer of $\mathcal{A}|_\Omega$, each segment of orbit without collisions are solutions of the octahedral problem, hence it is indeed a solution of the Newtonian six-body problem. We shall prove that there are no \emph{other} collisions, like quadruple or total collision, and no extra double collision. In the next section we will show that the double collisions (in $t=0,T/3,2T/3 \quad \text{mod}\text{  }T$ ) are \textit{regularizable}.

\begin{prop} \label{without collisions}If $X$ is a minimizer of $\left.\mathcal{A}\right|_\Omega$, it is a solution of the $6-$body problem in every interval without collisions. \end{prop}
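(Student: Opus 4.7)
The plan is to show that the minimizer $X$ satisfies the Euler--Lagrange system (\ref{eq movi q}) on every open interval $(a,b)\subset\real$ along which $X$ has no collisions; the standard ODE regularity off $\Delta$ then upgrades such a weak solution to a classical one. The subtlety is that $X$ only minimizes inside $\Omega$, which carries three restrictions beyond the Sobolev setting: the $G$-invariance inherited from $\Lambda_G$, the pointwise inequality $X(t)\in S$, and the fixed endpoint condition $x(0)=0$. The strategy is, for any test function supported in a collision-free interval, to produce a symmetrized perturbation that stays in $\Omega$, so that only the $G$-invariance acts as a genuine constraint.

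First I fix an interval $(a,b)$ along which $X$ is collision-free. Since $x(0)=0$ is a collision and $X\in\Lambda_G$, the $G$-symmetry forces collisions at the entire time-orbit of $0$ under the $G$-action on $\real/T\mathbb{Z}$, namely $\{0,T/3,2T/3\}\pmod T$; in particular $(a,b)$ is disjoint from this set. On any compact subinterval $[a',b']\subset(a,b)$ the three coordinates of $X(t)$ are bounded below by a common $\delta>0$, and by $G$-equivariance this uniform positivity persists on the finite union $E\subset\real/T\mathbb{Z}$ of the time-translates and time-reflections of $[a',b']$ generated by $G$. The set $E$ is still disjoint from $\{0,T/3,2T/3\}$.

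Given $\eta\in C_c^\infty((a',b'),\real^3)$, I form the symmetrized variation
$$\tilde\eta=\frac{1}{|G|}\sum_{\gamma\in G}\gamma\cdot\eta\in\Lambda_G,$$
whose support lies in $E$. Then $\tilde\eta(0)=0$, so the constraint $x(0)=0$ is preserved by the perturbation, and the uniform gap $\delta$ together with the boundedness of $\tilde\eta$ ensures that $X+\epsilon\tilde\eta$ still takes values in $S$ for $|\epsilon|$ small enough. Hence $X+\epsilon\tilde\eta\in\Omega$, and minimality of $X$ yields $d\mathcal{A}(X)[\tilde\eta]=0$. Because $G$ acts by isometries leaving $\mathcal{A}$ invariant and $X$ is $G$-invariant, one has $d\mathcal{A}(X)[\gamma\cdot\eta]=d\mathcal{A}(X)[\eta]$ for every $\gamma\in G$; averaging gives $d\mathcal{A}(X)[\eta]=0$ for every $\eta\in C_c^\infty((a',b'),\real^3)$. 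This is the weak form of $\ddot X=\nabla U(X)$ on $(a',b')$; since $U$ is smooth away from $\Delta$, bootstrapping upgrades $X$ to a classical $C^\infty$ solution of (\ref{eq movi q}) on $(a',b')$, and exhausting $(a,b)$ by such compact subintervals concludes the proof. The main technical obstacle is precisely the preservation of the constraints defining $\Omega$ under the symmetrized perturbation, and it is dispatched by the disjointness of $E$ from the forced collision times and by the uniform positivity of the coordinates of $X$ on $E$.
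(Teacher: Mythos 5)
Your proposal is correct and follows essentially the same route as the paper: symmetrize a compactly supported variation over the group $G$, use minimality together with the $G$-invariance of $\mathcal{A}$ and of $X$ to reduce the vanishing of the derivative to the unsymmetrized test function, and then invoke the fundamental lemma of the calculus of variations to obtain $\ddot X=\nabla U(X)$ on the collision-free interval. The only difference is that you explicitly verify that the perturbed loop remains in $\Omega$ (preservation of $x(0)=0$ and of the constraint $X(t)\in S$ via the uniform lower bound $\delta$), a point the paper leaves implicit.
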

\begin{proof}

Denote by $T_c(X)\subset\real$ the set of collision times of $X$. Since $X$ is a continuous path with finite action, $T_c(X)$ is closed and has zero measure. $\real\setminus T_c(X)$ is the union of open intervals; let $(a,b)$ be one of those intervals. Since in our orbit $t=0,T/3$ are collision times, $b-a\leq T/3$. By invariance of $X$ under the action of $G$, we can assume that $(a,b)\subset(0,T/3)$. Let
$$H_0^1([a,b],\mathcal{X})=\{\xi\in H^1([a,b],\mathcal{X}):\quad \text{supp}(\xi)\subset(a,b)\}.$$
Let us define $\xi$ on $\real$ by setting $\xi(t)=0$ for $t\in[0,T] \setminus(a,b)$, and consider the unique $T$-periodic constructed from $\xi$, still denoted $\xi$. In order to symmetrize $\xi$ we will define $\xi_G$:
$$\xi_G=\frac{1}{|G|}\sum_{g\in G}{g\xi},$$
where $|G|=6$ is the cardinality of the group $G$. It is clear that $\xi_G$ and supp$(\xi_G)$ are containd in the complement of $T_C(X)$, hence $s\mapsto\mathcal{A}(X+s\xi_G)$ is differentiable for $s$ small enough. Moreover:

$$\mathcal{A}(X+s\xi_G)=\int_0^T\left(\frac{\left\|\dot{X}(t)\right\|^2}{2}+s\dot{X}(T)\cdot\dot{\xi}_G(t)+\frac{s^2}{2}\left\|\dot{\xi_G(t)}\right\|^2+U(X(t)+s\xi_G(t))\right)dt, $$ hence $$\left.\frac{d}{ds}\mathcal{A}(X+s\xi_G)\right|_{s=0}=\int_0^T(\dot{X}(t)\cdot\dot\xi_G(t)+\nabla U(X(t))\cdot\xi_G(t))dt.$$
Since $X$ is a minimizer of $\mathcal{A}$, in every direction where $\mathcal{A}$ is differentiable at $X$, the derivate is equal to zero, hence
\begin{equation} \label{sei la}
\begin{array}{ccc}
\left.\frac{d}{ds}\mathcal{A}(X+s\xi_G)\right|_{s=0}&=&\int_0^T(\dot{X}(t)\cdot\dot\xi_G(t)+\nabla U(X(t))\cdot\xi_G(t))dt\\
 &=&\int_0^T(\dot{X}(t)\cdot\dot\xi(t)+\nabla U(X(t))\cdot\xi(t))dt\\
&=&\int_a^b(\dot{X}(t)\cdot\dot\xi(t)+\nabla U(X(t))\cdot\xi(t))dt=0\\
\end{array}
\end{equation}

A classical result of Calculus of Variations (see ~\cite{Young}) shows that $(\ref{sei la})$ implies:
$$\ddot{X}=\nabla U(X),\quad t\in(a,b)$$
so, $\left.X\right|_{(a,b)}$ is a solution of the six-body problem. This finishes the proof.
\end{proof}

Next, we will prove that the only \textit{central configuration} in the newtonian symmetric octahedral six-body problem is the \textit{octahedron}. A configuration in the $n$-body problem with positions $({q_1},...,{q_n})$ and masses $m_1,...m_n$ is called \textit{central} if $$\sum_{{j=1,...,n}\atop{j\neq i}}m_j\frac{q_i-q_j}{|q_i-q_j|^3}+\lambda{q_i}=0\quad\quad \text{for}\quad\quad i=1,...,n.$$ 
In our problem $n=6$, $q_1=(x,0,0)$, $q_2=(-x,0,0)$, $q_3=(0,y,0)$, $q_4=(0,-y,0)$, $q_5=(0,0,z)$, $q_6=(0,0,-z)$ and $m_i=1/2$. So the previous condition is equivalent to the existence of $\lambda\in\real$ such that:  
\begin{equation} \label{sistema c.c.}
\left\{\begin{array}{ccc}
\frac{1}{8x^3}+\frac{1}{(x^2+y^2)^{3/2}}+\frac{1}{(x^2+z^2)^{3/2}}& = &-\lambda \\
\frac{1}{8y^3}+\frac{1}{(x^2+y^2)^{3/2}}+\frac{1}{(y^2+z^2)^{3/2}}& = &-\lambda \\
\frac{1}{8z^3}+\frac{1}{(x^2+z^2)^{3/2}}+\frac{1}{(y^2+z^2)^{3/2}}& = &-\lambda 
\end{array}.\right. 
\end{equation}

\begin{lemma}\label{unique c.c.} The unique central configuration in the octahedral six-body problem is the regular octahedron.
\end{lemma}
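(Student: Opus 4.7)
The plan is to eliminate $\lambda$ by pairwise subtraction of the three equations in $(\ref{sistema c.c.})$ and then exploit monotonicity of the two types of terms that appear. Since the positions define a genuine six-body configuration, we have $x,y,z>0$, and the system is invariant under simultaneously permuting the coordinates and the equations, so without loss of generality we may assume $x=\max\{x,y,z\}$.

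First I would subtract equation (2) from equation (1), obtaining
$$\left(\frac{1}{8x^3}-\frac{1}{8y^3}\right)+\left(\frac{1}{(x^2+z^2)^{3/2}}-\frac{1}{(y^2+z^2)^{3/2}}\right)=0.$$
The two terms $1/(x^2+y^2)^{3/2}$ cancel, which is the point of this particular pairing. Because $x\geq y>0$, the function $t\mapsto 1/t^3$ is strictly decreasing on $(0,\infty)$, so the first parenthesis is $\leq 0$; likewise $x^2+z^2\geq y^2+z^2$ makes the second parenthesis $\leq 0$. A sum of two non-positive numbers that equals zero forces each to vanish, and from either one we conclude $x=y$.

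Next I would subtract equation (3) from equation (1); the terms $1/(x^2+z^2)^{3/2}$ cancel, leaving
$$\left(\frac{1}{8x^3}-\frac{1}{8z^3}\right)+\left(\frac{1}{(x^2+y^2)^{3/2}}-\frac{1}{(y^2+z^2)^{3/2}}\right)=0.$$
Since $x\geq z$, the first term is $\leq 0$; since $x^2\geq z^2$ gives $x^2+y^2\geq y^2+z^2$, the second term is also $\leq 0$. Again both must vanish, so $x=z$. Combining, $x=y=z$, i.e.\ the configuration is a regular octahedron, and one easily verifies conversely that such a configuration does solve the system for an appropriate $\lambda<0$.

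I do not anticipate any serious obstacle; the main care required is to choose the right pair of equations so that one of the three ``cross'' summands on the left-hand sides cancels, leaving only two summands with a consistent sign under the ordering assumption. If one prefers to avoid the \emph{WLOG} step, one can repeat the same pairwise subtraction argument for any ordering of $\{x,y,z\}$ and reach the same conclusion; either way the monotonicity of $t\mapsto t^{-3}$ and $t\mapsto t^{-3/2}$ on $(0,\infty)$ is what drives the proof.
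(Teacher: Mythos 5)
Your proof is correct and is essentially the paper's argument: the paper likewise eliminates $\lambda$ by comparing the first two equations of $(\ref{sistema c.c.})$, notes that the term $1/(x^2+y^2)^{3/2}$ is common to both, and uses the strict monotonicity of $t\mapsto t^{-3}$ and $t\mapsto (t+c)^{-3/2}$ to derive a contradiction from $x>y$. Your phrasing via ``a sum of non-positive terms equal to zero must vanish termwise'' is just the contrapositive form of the same monotonicity argument.
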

\begin{proof} To simplify the proof, we will ommit dependence on $t$. 
It is easy to check that if $x=y=z=a>0$, then $X=(x,y,z)=(c,c,c)$ is a solution of the system  (\ref{sistema c.c.}) with $\lambda=-\frac{1+2^{5/2}}{8c^3}$, hence it is a central configuration. On the other hand, suppose that $X=(x,y,z)$ is a solution of (\ref{sistema c.c.}) and, by contradiction, that the condition $x=y=z$ is not satisfied. Without loss of generality, we can assume that $x>y$. We have
$$\frac{1}{8x^3}<\frac{1}{8y^3}\quad\quad \text{and} \quad\quad \frac{1}{(x^2+z^2)^{3/2}}<\frac{1}{(y^2+z^2)^{3/2}},$$
and we obtain a contradiction with the two first equations of (\ref{sistema c.c.}).

\end{proof}

\begin{prop} \label{total collision} A minimizer of $\left.\mathcal{A}\right|_\Omega$ is free of total collision.
\end{prop}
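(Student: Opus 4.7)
The plan is to argue by contradiction. Suppose that a minimizer $X=(x,y,z)\in\Omega$ of $\mathcal{A}|_{\Omega}$ has a total collision at some time $t_0$, i.e.\ $x(t_0)=y(t_0)=z(t_0)=0$. By the $G$-invariance of $X$ we may, after a time translation, assume $t_0\in[0,T/3]$. Since $X$ has finite action, we may further assume (after shrinking the interval) that $t_0$ is an isolated total collision time, so that on some open interval ending at $t_0$ the loop $X$ is collision-free.

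The first step is the asymptotic analysis of $X$ at the total collision. By Proposition \ref{without collisions}, $X$ is a genuine Newtonian solution on each collision-free sub-interval, so the classical Sundman--Wintner theorems apply to the branch of $X$ approaching $t_0$. They yield
$$
\|X(t)\|\sim C\,|t-t_0|^{2/3},\qquad \frac{X(t)}{\|X(t)\|}\longrightarrow X_\infty,
$$
where $X_\infty$ is a normalized central configuration of the reduced system. Because $X(t)\in S$ for every $t$ and Lemma \ref{unique c.c.} identifies the regular octahedron as the \emph{unique} central configuration, we must have $X_\infty=(1,1,1)/\sqrt{3}$. Consequently $x(t),\,y(t),\,z(t)$ all vanish at the common rate $\sim c|t-t_0|^{2/3}$ and the motion near $t_0$ is asymptotically tangent to the diagonal ray $\Delta_0=\{x=y=z\geq 0\}$.

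Next I would construct a competitor $\tilde X\in\Omega$ of strictly smaller action, thus reaching a contradiction. For $\delta>0$ small I replace $X|_{[t_0-\delta,t_0+\delta]}$ by a test path that shares the same endpoints but is bounded away from the origin. A natural candidate is obtained by shifting the collision arc along the homothetic direction by a small amount $(s,s,s)$ and smoothly interpolating back to $X$ on short collars near $t_0\pm\delta$. Using the $(-1)$-homogeneity of $U$ together with the Sundman asymptotics, a direct expansion of the action along this one-parameter family shows that, for an appropriate relation between $s$ and $\delta$, the decrease of $\int U$ is of strictly larger order than the extra kinetic cost produced by the collars, so that the modified path has strictly smaller action. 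Symmetrizing the corresponding variation $\xi$ via $\xi_G=\frac{1}{|G|}\sum_{g\in G}g\xi$, as in Proposition \ref{without collisions}, keeps the perturbation inside $\Lambda_G$ and inside the cone $S$, producing a legitimate element of $\Omega$ with action below $\mathcal{A}(X)$ and contradicting minimality.

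The main obstacle I anticipate is the boundary case $t_0\in\{0,T/3\}$ (the two cases are equivalent by the $g$-symmetry). At $t_0=0$ the constraint $x(0)=0$ is already built into $\Omega$, so only $y(0)=z(0)=0$ has to be contradicted, and the variation must simultaneously preserve $x(0)=0$, the reflection invariance $y(-t)=z(t),\ x(-t)=x(t)$, and the constraint $X(t)\in S$. These conditions are compatible: one picks the perturbation supported near $t=0$, with $\delta x\equiv 0$ and $\delta y(t)=\delta z(-t)$ a small bump moving the pair $(y,z)$ along the diagonal $y=z$; the Sundman/homogeneity comparison of the previous paragraph restricts to this admissible perturbation space and again yields a strict decrease of action. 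Once both the interior and the boundary cases are handled, no total collision can occur, and the proposition follows.
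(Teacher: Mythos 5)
Your strategy is genuinely different from the paper's, and as written it has a gap. The paper never analyzes the minimizer $X$ near the total collision at all: it bounds $\mathcal{A}_{\frac{1}{6}}(X)$ from below by the action of a one-dimensional Kepler path $r X_{c^+}$ (using that the regular octahedron minimizes $\tilde{U}=\|X\|U$ on $S$, which is where Lemma \ref{unique c.c.} enters), applies Gordon's concavity argument in the collision time $\overline{t}$ to conclude $\mathcal{A}_{\frac{1}{6}}(X)\geq\mathcal{A}_{\frac{1}{6}}(\overline{X})$ where $\overline{X}$ is half of the homothetic octahedral ejection--collision orbit with collision at $t=0$, and then deforms $\overline{X}$ (not $X$) by $\varepsilon f_\varepsilon\,(0,1,1)$ to produce an element of $\Omega_{\frac{1}{6}}$ of strictly smaller action. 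You instead blow up $X$ itself at the collision via Sundman--Wintner and deform $X$ locally. That route is classical, but it needs more information about $X$ than is available at this stage of the argument.

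The concrete gap is the sentence ``we may further assume (after shrinking the interval) that $t_0$ is an isolated total collision time, so that on some open interval ending at $t_0$ the loop $X$ is collision-free.'' Nothing proved so far rules out double or quadruple collision times accumulating at $t_0$ from one or both sides; in that case $X$ is not a classical solution on any one-sided neighborhood of $t_0$, and the Sundman--Wintner asymptotics ($\|X(t)\|\sim C|t-t_0|^{2/3}$ and convergence of $X/\|X\|$ to a central configuration) on which your whole construction rests do not apply. In the paper, isolation of collision times is established only for double and quadruple collisions, by the cluster-energy/Lagrange--Jacobi argument, and those results come \emph{after} --- and in the quadruple case explicitly invoke --- the absence of total collisions, so you cannot appeal to them here without circularity. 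The Kepler/Gordon comparison is designed precisely to sidestep this: it uses only the pointwise bound $U\geq\mathcal{G}/\|X\|$ and finiteness of the action. A secondary, fixable issue: your claim that the loss of potential dominates the kinetic cost is the quantitative crux and is only asserted; note that the cross term $\int\dot{X}\cdot\varepsilon\dot{f}_\varepsilon Z\,dt$ over the interpolation collars is a priori of the same order $\varepsilon^{1/2}$ as the potential gain (since $|\dot{X}|\sim|t-t_0|^{-1/3}$ there), so its sign must be checked --- this is exactly the point where the paper uses $X_{c^+}\cdot Z>0$. The estimate does go through with the correct orientation of the bump, but it is not automatic and should be written out.
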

\begin{proof}

The invariance of $\mathcal{A}$ by the action of  $G$ implies that we just need to prove that if $X$ is a minimizer, $\left.X\right|_{[0,T/6]}$ is free of total collision. Following ~\cite{Venturelli}, we introduce the notation
$$\mathcal{A}_{\frac{1}{6}}:H^1([0,T/6],\mathcal{X})\rightarrow \real_+\cup\{+\infty\}, \quad\mathcal{A}_{\frac{1}{6}}(X)=\int_0^{\frac{T}{6}}\left(K(X,\dot{X})+U(X)\right)(t)dt;$$
$$\Omega_{\frac{1}{6}}=\{X\in H^1([0,T/6],S): \quad x(0)=0,\quad y(0)=z(0), \quad x(T/6)=y(T/6)\}.$$
Since $\mathcal{A}$ is invariant by the action of $G$, $X$ is a minimizer of $\mathcal{A}$ if and only if $X|_{[0,T/6]}$ is a minimizer of $\mathcal{A}_{\frac{1}{6}}$.

By contradiction, suppose that there is a total collision at the instant $\overline{t}\in[0,T/6]$. First of all we will prove that $X|_{[0,T/6]}$ is one half of the homothetic ejection-collision orbit with central configuration the regular octahedron. Afterwards we explicit a deformation of $X$, $\overline{X}_\varepsilon$, which decreases the action, hence producing a contradiction. 

We denote by $X_{c^+}$ the normalized central configuration in our problem: $\left\|X_{c^+}\right\|^2=1$. Let us term $r(t)=\left\|X(t)\right\|$ and $\tilde{U}(X)=\left\|X\right\|U(X)$. Since $X_{c^+}$ is the unique central configuration for the octahedral six-body problem, $\mathcal{G}=\tilde{U}(X_{c^+})$ is a minimum of $\tilde{U}$ on $S$. Then
$$\mathcal{A}_{\frac{1}{6}}\left(\left. X\right|_{[0,\frac{T}{6}]}\right)\geq\int_0^{\frac{T}{6}}\left(\frac{\dot{r}^2}{2}+\frac{\mathcal{G}}{r}\right)(t)dt=\mathcal{A}_{\frac{1}{6}}(rX_{c^+}).$$
Let us consider now the function $$a(s)=\mathcal{A}_\frac{1}{6}(sX_{c^+})=\int_0^{T/6}\left(\frac{\dot{s}(t)^2}{2}+\frac{\mathcal{G}}{s(t)}\right)dt,$$
defined on the set of one dimensional paths $s\in H^1([0,T/6],\real_+)$ satisfying $s(\overline{t})=0$. The minimum of $a$ is achived by the path $t\mapsto v(t)$ obtained by joining one-half of a collision-ejection solution of period $2\overline{t}$, with one-half of an ejection-collision solution  of period $2(T/6-\overline{t})$, for the one-dimensional Kepler problem: 
\begin{equation}
\ddot{v}=-\frac{\mathcal{G}}{v^2},
\end{equation}

We remark that the path $t\mapsto v(t)X_{c^+}$ is not necessarly an element of $\Omega_{\frac{1}{6}}$, but we still have 
$$\mathcal{A}_{\frac{1}{6}}\left(\left. X\right|_{\left[0,\frac{T}{6}\right]}\right)\geq \mathcal{A}_{\frac{1}{6}}\left(vX_{c^+}\right),$$
By the Virial Theorem, the action of a homothetic collision-ejection motion of period $\tau$, denoted here $a(\tau)$, is equal to the action of a circular motion of same period, $\tau$, thus
$$a(\tau)=\alpha_0\tau^{\frac{1}{3}}, \quad\quad \alpha_0=\frac{3}{2^{\frac{1}{3}}}(\pi\mathcal{G})^\frac{2}{3}.$$
Using an argument introduced by Gordon ~\cite{Gordon}, essentially the concavity of $\tau\mapsto \alpha_0\tau^\frac{1}{3}$, we have: 
$$\mathcal{A}_\frac{1}{6}(vX_{c^+})=\frac{\alpha_0}{2^\frac{2}{3}}\left(\overline{t}^\frac{1}{3}+(T/6-\overline{t})^\frac{1}{3}\right)\geq\frac{\alpha_0}{2^\frac{2}{3}}(T/6)^\frac{1}{3}=\mathcal{A}_\frac{1}{6}{(\overline{X})},$$
where
$$\overline{X}:[0,T/6]\rightarrow S, \quad \overline{X}(t)=u(t)X_{c^+}$$
is one-half of the homothetic ejection-collision solution of period $2T/6$. Notice that $\overline{X}$ is now an element of $\Omega_{\frac{1}{6}}$. Now we will  construct a deformation of $\overline{X}=(\overline{x},\overline{y}_,\overline{z})$ decreasing the action. Let
$$f_\varepsilon(t)=\left\{\begin{array}{cc}
1,& t\in[0,\varepsilon^\frac{3}{2}]\\
\frac{\varepsilon+\varepsilon^\frac{3}{2}-t}{\varepsilon},& t\in[\varepsilon^\frac{3}{2},\varepsilon^\frac{3}{2}+\varepsilon]\\
0,& t\in [\varepsilon^\frac{3}{2}+\varepsilon, T/6]
\end{array}.\right.$$
Let us define $\overline{X}_\varepsilon=\overline{X}+\varepsilon f_\varepsilon(t)Z$, where $Z=(z_1,z_2,z_3)$ is a fixed configuration satisfying $z_2=z_3>0$ and $z_1=0$. We can say, for instance, $Z=(0,1,1)$. We remark that $\overline{X}_\varepsilon$ is still in $\Omega_\frac{1}{6}$, we can write the diference between the action of $\overline{X}_\varepsilon$ and the action of $\overline{X}$ as the sum of three terms:
$$\Delta\mathcal{A}_\frac{1}{6}=\mathcal{A}_\frac{1}{6}(\overline{X}_\varepsilon)-\mathcal{A}_\frac{1}{6}(\overline{X})=A_1(\varepsilon)+A_2(\varepsilon)+A_3(\varepsilon),$$
where 
$$A_1(\varepsilon)=\int_0^{\varepsilon^\frac{3}{2}}\left(U(\overline{X}_\varepsilon)-U(\overline{X})\right)(t)dt,\quad A_2(\varepsilon)=\int_{\varepsilon^\frac{3}{2}}^{\varepsilon^\frac{3}{2}+\varepsilon}\left(U(\overline{X}_\varepsilon)-U(\overline{X})\right)(t)dt,$$
$$A_3(\varepsilon)=\int_{\varepsilon^\frac{3}{2}}^{\varepsilon^\frac{3}{2}+\varepsilon}\left(K(\overline{X}_\varepsilon,\dot{\overline{X}}_\varepsilon)-K(\overline{X},\dot{\overline{X}})\right)(t)dt.$$
Using the classical Sundman's estimation we have
\begin{equation} \label{Sundman}u(t)=u_0t^\frac{2}{3}+O(t),\quad \dot{u}(t)=\frac{2}{3}u_0t^{-\frac{1}{3}}+O(1).\end{equation}
We can write $X_{c^+}=\frac{1}{\sqrt{3}}(1,1,1)$. Notice that each $A_i, i=1,2$ is non-positive. Let us prove that $A_1(\varepsilon)<0$. Let $\overline{A}_1(\varepsilon)$ be defined as

$$\overline{A}_1(\varepsilon)=\frac{1}{8}\int_0^{\varepsilon^\frac{3}{2}}\left(\frac{1}{\overline{y}(t)+\varepsilon}-\frac{1}{\overline{y}(t)}\right)dt,$$
Notice that $A_1(\varepsilon)\leq\overline{A}_1(\varepsilon)\leq0$.
$$\overline{A}_1(\varepsilon)=\frac{1}{8}\int_0^{\varepsilon^\frac{3}{2}}\left(\frac{1}{\frac{1}{\sqrt{3}}u_0t^\frac{2}{3}+O(t)+\varepsilon}-\frac{1}{\frac{1}{\sqrt{3}}u_0t^\frac{2}{3}+O(t)}\right)dt,$$
introducing the variable $\tau$ define by $t^\frac{2}{3}=\varepsilon\tau$, we get
$$\begin{aligned}
\overline{A}_1(\varepsilon) & =\frac{3}{16}\varepsilon^\frac{3}{2}\sqrt{3}\int_0^1\left(\frac{1}{u_0\varepsilon\tau+\varepsilon\sqrt{3}+O((\varepsilon\tau)^\frac{3}{2})}-\frac{1}{u_0\varepsilon\tau+O((\varepsilon\tau)^\frac{3}{2})}\right)\tau^\frac{1}{2}d\tau,\\
 & =\frac{3}{16}\varepsilon^\frac{1}{2}\sqrt{3}\int_0^1\left(\frac{1}{u_0\tau+\sqrt{3}+O(\varepsilon^\frac{1}{2}\tau^\frac{3}{2})}-\frac{1}{u_0\tau+O(\varepsilon^\frac{1}{2}\tau^\frac{3}{2})}\right)\tau^\frac{1}{2}d\tau,\\
 & =\frac{3}{16}\varepsilon^\frac{1}{2}\sqrt{3}\int_0^1\left[\left(\frac{1}{u_0\tau+\sqrt{3}}-\frac{1}{u_0\tau}\right)\tau^\frac{1}{2}+O(\varepsilon^\frac{1}{2})\right]d\tau,\\
 & =\frac{3}{16}\varepsilon^\frac{1}{2}\sqrt{3}\int_0^1\left(\frac{1}{u_0\tau+\sqrt{3}}-\frac{1}{u_0\tau}\right)\tau^\frac{1}{2}d\tau+O(\varepsilon).
\end{aligned}
$$
We know that $\frac{3\sqrt{3}}{16}\int_0^1\left(\frac{1}{u_0\tau}-\frac{1}{u_0\tau+\sqrt{3}}\right)\tau^\frac{1}{2}d\tau=C>0$, so
\begin{equation} \label{A1}
\overline{A}_1(\varepsilon)=-C\varepsilon^\frac{1}{2}+O(\varepsilon), \quad\quad \varepsilon\rightarrow 0^+.
\end{equation}
Now, we will prove that $A_3(\varepsilon)=O(\varepsilon)$, and with $(\ref{A1})$ we have
$$\Delta\mathcal{A}_\frac{1}{6}=A_1(\varepsilon)+A_2(\varepsilon)+A_3(\varepsilon)\leq -C\varepsilon^\frac{1}{2}+O(\varepsilon),$$
so $\Delta\mathcal{A}_\frac{1}{6}<0$ for $\varepsilon>0$ sufficiently small. And we have a contradiction, which proves our proposition.

Let us study $A_3(\varepsilon)$,

 \begin{align} A_3(\varepsilon)& =\quad\frac{1}{2}\int_{\varepsilon^\frac{3}{2}}^{\varepsilon^\frac{3}{2}+\varepsilon}\left(\left\|\dot{\overline{X}}_\varepsilon\right\|^2-\left\|\dot{\overline{X}}\right\|^2\right)dt\nonumber \\
 & \leq\frac{1}{2}\int_{\varepsilon^\frac{3}{2}}^{\varepsilon^\frac{3}{2}+\varepsilon}\left(\left\|\dot{\overline{X}}\right\|^2-2\dot{\overline{X}}\cdot Z+||Z||^2-\left\|\dot{\overline{X}}\right\|^2\right)dt\nonumber\\
&\leq\frac{1}{2}\int_{\varepsilon^\frac{3}{2}}^{\varepsilon^\frac{3}{2}+\varepsilon}\left(||Z||^2-2\dot{u}(t)X_{C_+}\cdot Z\right)dt\nonumber\\
&\leq \frac{1}{2}\int_{\varepsilon^\frac{3}{2}}^{\varepsilon^\frac{3}{2}+\varepsilon}||Z||^2=O(\varepsilon),\nonumber 
\end{align}

where the last inequality holds because $X_{c_+}\cdot Z$ is postive. This ends our proof.
\end{proof}

In order to prove that there are no quadruple collisions, we need to prove that there is not an \textit{extra} double collisions. A double collision is called \textit{extra} when it is a double collision with masses on the same axis at an instant different from $t=kT/3$, where $k\in\mathbb{Z}$.

Without loss of generality, we just have to prove that there is no extra double collisions for the cluster $\{1,2\}$ (masses $m_1,m_2$) in the interval $(0,T/6)$, and the result could be extended for all real time. Extra double collisions for the cluster $\{1,2\}$ means that $x=0$, $y>0$ and $z>0$. The other cases ($y=0,x>0,z>0$ and $z=0,x>0,y>0$) are analogous. Let us define the lagrangean and energy of that cluster in the following way:
$$L_x(X,\dot{X})=\frac{1}{2}\dot{x}^2+\frac{1}{8x} \quad \text{and} \quad h_x(X,\dot{X})=\frac{1}{2}\dot{x}^2-\frac{1}{8x},$$

following ~\cite{Venturelli}, we introduce this notation for the cluster. Then, the kinetic and potential energies are define as $$K_x(X,\dot{X})=\frac{1}{2}\dot{x}^2 \quad \text{and}\quad U_x(X,\dot{X})=\frac{1}{8x}.$$ 
\begin{defn} Let $f(t,\varepsilon)$ be a function of two real values and $g(\varepsilon)$ a function of $\varepsilon$. We say that $f(t,\varepsilon)=O_\varepsilon(g(\varepsilon)), \quad\varepsilon\rightarrow\varepsilon_0$ if $f(t,\varepsilon)=O(g(\varepsilon)),\quad \varepsilon\rightarrow\varepsilon_0$ uniformly in $t$. Analogously, we define $f(t,\varepsilon)=o(g(\varepsilon)),\quad \varepsilon\rightarrow\varepsilon_0$.
\end{defn}

\begin{prop} \label{extra double collision} Let $X=(x,y,z)$ be a minimizer of $\left.\mathcal{A}\right|_\Omega$ and $\overline{t}\in(0,T/6]$ an extra double collision of $m_1,m_2$, \textit{i.e.} $x(\overline{t})=0$, $y(\overline{t})>0$ and $z(\overline{t})>0$. There is an interval $[a,b]$ such that 
\begin{equation}\label{sol}
\overline{t}\in[a,b]\subset(0,T/6],
\end{equation}
and $t\mapsto h_x(X(t),\dot{X}(t))$ is an absolutely continuous function in the interval $[a,b]$.
\end{prop}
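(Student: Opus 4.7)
The plan is to choose $[a,b]$ around $\bar t$ on which $y$ and $z$ are bounded away from $0$, then combine two variational arguments: one showing that $\dot y$ and $\dot z$ are Lipschitz on $[a,b]$, and another showing that the full Hamiltonian $H=K-U$ is constant on $[a,b]$. Writing $U=\tfrac{1}{8x}+V$, where $V$ collects the remaining potential terms, we will have on $(a,b)\setminus\{x=0\}$
$$h_x=\tfrac12\dot x^2-\tfrac{1}{8x}=H-\tfrac12\dot y^2-\tfrac12\dot z^2+V(X),$$
and the right-hand side will extend absolutely continuously to all of $[a,b]$.

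By continuity of $y,z$ and the hypothesis $y(\bar t),z(\bar t)>0$, fix $c>0$ and an interval $[a,b]$ with $\bar t\in[a,b]\subset(0,T/6]$ such that $y(t),z(t)\geq c$ on $[a,b]$; shrinking $[a,b]$, assume its $G$-orbit consists of pairwise disjoint intervals, so every collision of $X$ in $[a,b]$ is an $x$-collision. For $\xi_y,\xi_z\in C_c^\infty((a,b))$, let $\xi$ be the $G$-symmetrization of $(0,\xi_y,\xi_z)$. Then $X+s\xi\in\Omega$ for $|s|$ small (the $x$-coordinate is unchanged; $y,z$ stay $\geq c/2$), and since $\partial_y U,\partial_z U$ are bounded on $\{x\geq 0,\,y\geq c,\,z\geq c\}$ \emph{even when} $x=0$, the map $s\mapsto\mathcal{A}(X+s\xi)$ is differentiable. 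Minimality yields the distributional Euler--Lagrange equations $\ddot y=\partial_y U(X)$, $\ddot z=\partial_z U(X)$ on $(a,b)$; as these right-hand sides are essentially bounded, $\dot y$ and $\dot z$ are Lipschitz on $[a,b]$.

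For the conservation of $H$, take $\psi\in C_c^\infty((a,b))$ with $G$-symmetrization $\tilde\psi$, and consider the time reparametrization $X_s(t)=X(t+s\tilde\psi(t))$. For $|s|$ small, $\phi_s(t)=t+s\tilde\psi(t)$ is an orientation-preserving diffeomorphism of $\real/T\mathbb{Z}$, and $X_s\in\Omega$ (symmetry, the constraint $X\in S$, and $x(0)=0$ are all preserved). The change of variable $\tau=\phi_s(t)$ recasts the action as
$$\mathcal{A}(X_s)=\int_0^T\left[\tfrac12|\dot X(\tau)|^2\,\phi_s'(\phi_s^{-1}(\tau))+\frac{U(X(\tau))}{\phi_s'(\phi_s^{-1}(\tau))}\right]d\tau.$$
Differentiating at $s=0$ (using $\mathcal A(X)<+\infty$ and $\phi_s'$ bounded away from $0$ and $\infty$ for small $|s|$, which supplies an $L^1$ dominating function) and imposing stationarity gives $\int_0^T H(\tau)\,\dot{\tilde\psi}(\tau)\,d\tau=0$, which after accounting for the symmetrization becomes $\int_a^b H\,\dot\psi\,dt=0$ for every $\psi$. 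Hence $H$ is almost everywhere constant on $[a,b]$.

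With both ingredients in hand, the decomposition of $h_x$ above yields the conclusion: $H$ is constant, $\dot y$ and $\dot z$ are Lipschitz, and $V\circ X$ is the composition of a $C^\infty$ function on $\{y,z\geq c,\,x\geq 0\}$ with an absolutely continuous path, so it is itself absolutely continuous on $[a,b]$. The main technical obstacle is to justify the differentiability of $s\mapsto\mathcal{A}(X_s)$ at $s=0$ despite the collisions of $X$ inside $(a,b)$; the reparametrized form of the action is precisely what makes this routine, since the integrand depends smoothly on $s$ and is uniformly controlled by $|\dot X|^2+U(X)\in L^1$.
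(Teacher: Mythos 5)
Your argument is correct, but it reaches the conclusion by a genuinely different route than the paper. The paper performs a \emph{partial} time reparametrization affecting only the colliding coordinate, $X_\varepsilon(\tau)=(x(\tau+\varepsilon\delta(\tau)),y(\tau),z(\tau))$, splits the action as $\mathcal{A}^x_{1/6}+\mathcal{A}^0_{1/6}$ with $K_x=\tfrac12\dot x^2$, $U_x=\tfrac{1}{8x}$, and after the change of variables obtains in one stroke the du Bois--Reymond identity $\int_a^b\bigl(h_x\dot\delta+\tfrac{\partial U_0}{\partial x}\dot x\,\delta\bigr)dt=0$, whence $h_x$ is absolutely continuous with $\dot h_x=\tfrac{\partial U_0}{\partial x}\dot x$. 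You instead combine two more standard ingredients --- conservation of the \emph{total} energy $H$ across the collision (via a full time reparametrization) and Lipschitz regularity of $\dot y,\dot z$ on $[a,b]$ (via smooth variations in the non-colliding directions, legitimate because $\partial_yU,\partial_zU$ stay bounded on $\{x\ge 0,\,y,z\ge c\}$) --- and then recover $h_x=H-\tfrac12\dot y^2-\tfrac12\dot z^2+U_0(X)$ algebraically. Each approach has its merits: the paper's is a single variation and directly yields the derivative formula $\dot h_x=\tfrac{\partial U_0}{\partial x}\dot x$ used in the subsequent corollary (your version recovers it too, by differentiating your decomposition and substituting $\ddot y=\partial_yU$, $\ddot z=\partial_zU$, so nothing downstream is lost); yours avoids the somewhat delicate partial reparametrization, whose compatibility with the $G$-symmetry constraint the paper does not really address, and it delivers as a by-product the continuity of the total energy across the collision, which the paper must re-derive separately in its regularization step (Proposition 4.2) using exactly your decomposition $h=h_x+K_0-U_0$ read in the opposite direction. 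Two small points to make explicit in a full write-up: the $G$-symmetrization of a time reparametrization is a $T/3$-periodic, odd function $\tilde\psi$ (not the loop symmetrization $\frac{1}{|G|}\sum g\xi$), and when $\overline t=T/6$ the orbit intervals of $(a,b)$ overlap at the endpoint fixed by the time reversal $t\mapsto T/3-t$, so the reduction of $\int_0^T H\dot{\tilde\psi}$ to $\int_a^bH\dot\psi$ uses the symmetry of $H$ about $T/6$ rather than disjointness; neither affects the validity of the argument.
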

\begin{proof}
Since $X$ is a minimizer of $\mathcal{A}$, $\left.X\right|_{[0,T/6]}$ is a minimizer of $\left.\mathcal{A}\right|_{\Omega_\frac{1}{6}}$ and the function $t\mapsto h_x(t)=h_x(X(t),\dot{X}(t))$ is well defined in the set of non-collisions times. This is an open set with full measure. By continuity of $t\mapsto X(t)$, there exists an interval $[a,b]$ as in $(\ref{sol})$ such that only double collisions on the $x$-axis occur in the interval $[a,b]$. Let $\delta:[0,T/6]\rightarrow \real$ be a function de class $\mathcal{C}^1$ whose support is contained in $[a,b]$. The fuction
$$t_\varepsilon:[0, T/6]\rightarrow [0, T/6],\quad \tau \mapsto  t_\varepsilon(\tau)=\tau+\varepsilon\delta(\tau),$$
is monotonic and bijective if $\varepsilon$ is sufficiently small. Let us denote by $t\mapsto \tau_\varepsilon(t)$  the inverse of the map $\tau\mapsto t_\varepsilon(t)$, hence
\begin{equation}\label{pre :)}\frac{d\tau_\varepsilon}{dt}=\frac{1}{1+\varepsilon\frac{d\delta}{d\tau}(\tau_\varepsilon(t))}=1-\varepsilon\frac{d\delta}{d\tau}(\tau_\varepsilon(t))+O_\varepsilon(\varepsilon^2).
\end{equation}
By definition of $t_\varepsilon$, we have $\tau_\varepsilon(t)-t=O_\varepsilon(\varepsilon)$. Replacing in $(\ref{pre :)})$ we have
\begin{equation}\label{:)}
\frac{d\tau_\varepsilon}{dt}(t)=1-\varepsilon\frac{d\delta}{dt}(t)+O_\varepsilon(\varepsilon).
\end{equation}

The path $X_\varepsilon(\tau)=(x(t_\varepsilon(\tau)), y(\tau), z(\tau))$ is a variation of $\left.X\right|_{[0,T/6]}$, and it is an element of $\Omega_\frac{1}{6}$. We define:
$$\mathcal{A}_\frac{1}{6}^x(X_\varepsilon)=\int_0^\frac{T}{6}\left(K_x(\dot{X}_\varepsilon(t_\varepsilon(\tau)))+U_x(X_\varepsilon(t_\varepsilon(\tau)))\right)d\tau=\int_0^\frac{T}{6}\left(\frac{1}{2}\dot{x}(t_\varepsilon(\tau))^2+\frac{1}{8x(t_\varepsilon(\tau))}\right)d\tau;$$
$$\mathcal{A}_\frac{1}{6}^0(X_\varepsilon)=\int_0^\frac{T}{6}\left(K_0(\dot{X}_\varepsilon(t_\varepsilon(\tau)))+U_0(X_\varepsilon(t_\varepsilon(\tau)))\right)d\tau,$$
where \label{U_0 and K_0}
$$K_0=\frac{1}{2}(\dot{y}^2+\dot{z}^2)$$
and
$$U_0=\left(\frac{1}{\sqrt{x^2+y^2}}+\frac{1}{\sqrt{x^2+z^2}}+\frac{1}{\sqrt{y^2+z^2}}\right)+\frac{1}{8}\left(\frac{1}{y}+\frac{1}{z}\right).$$
When we make the change of variables $\tau=\tau_\varepsilon(t)$ in $\mathcal{A}_\frac{1}{6}^x(X_\varepsilon)$, we have
$$\mathcal{A}_\frac{1}{6}^x(X_\varepsilon)=\int_0^\frac{T}{6}\left(\frac{1}{2}\frac{\dot{x}(t)^2}{\frac{d\tau_\varepsilon}{dt}}+\frac{1}{8x(t)}\frac{d\tau_\varepsilon}{dt}\right)dt.$$
Since $\left.X\right|_{[0,T/6]}$ is a minimizer of $\left.\mathcal{A}_\frac{1}{6}\right|_\Omega$ and $\varepsilon\rightarrow \mathcal{A}_\frac{1}{6}(X_\varepsilon)$ is a differentiable function, using $(\ref{:)})$ we have
$$\left.\frac{d}{d\varepsilon}\right|_{\varepsilon=0}\mathcal{A}_\frac{1}{6}(X_\varepsilon)=\int_a^b\left(h_x\dot{\delta}+\frac{\partial U_0}{\partial x}\dot{x}\delta\right)dt=0\quad\quad \forall \delta.$$
Like in proposition $\ref{without collisions}$, we will use the Fundamental Lemma of the Calculus of Variations (see ~\cite{Young}), to conclude that $t\mapsto h_x(t)$ is absolutely continuous in $[a,b]$ and 
$$\dot{h}_x(t)=\frac{\partial U_0}{\partial x}(X(t))\dot{x},$$
for almost every $t\in[a,b]$.
\end{proof}

\begin{cor}Extra double collision times of a minimizer of $\left.\mathcal{A}\right|_\Omega$ are isolated in the set of collision times.
\end{cor}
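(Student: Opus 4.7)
The plan is to argue by contradiction. Suppose some extra double collision $\overline t$ of the minimizer $X$ is a limit point of $T_c(X)$; after relabeling axes, take it to be a $\{1,2\}$-collision, so $x(\overline t)=0$, $y(\overline t)>0$, $z(\overline t)>0$. Applying Proposition~\ref{extra double collision} to $\overline t$, I get a closed interval $[a,b]\subset(0,T/6]$ containing $\overline t$ on which only $x$-axis collisions occur and on which $h_x$ is absolutely continuous. Shrinking $[a,b]$ if necessary (using continuity of $y$ and $z$) I may further assume $y,z\ge\eta>0$ on $[a,b]$, so every time in $T_c(X)\cap[a,b]$ is an extra $\{1,2\}$-collision, hence a zero of $x$. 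Because $h_x$ is absolutely continuous on the compact interval $[a,b]$, it extends to a continuous, and in particular bounded, function there; fix $M$ with $|h_x|\le M$ on $[a,b]$.

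Under the assumed failure of the corollary, I can extract a sequence $t_n\in T_c(X)\cap[a,b]$ with $t_n\to\overline t$ and $t_n\neq\overline t$. Since $T_c(X)$ has zero measure (Proposition~\ref{without collisions} and its proof), the nonnegative continuous function $x$ cannot vanish identically on the subinterval $I_n$ with endpoints $t_n$ and $\overline t$, so $x$ attains a strictly positive maximum on $I_n$ at some interior point $s_n$. The inequality $x(s_n)>0$ forces $s_n$ to be a non-collision time of $X$, so $X$ is a classical Newtonian solution in a neighborhood of $s_n$ by Proposition~\ref{without collisions} and $\dot x(s_n)=0$. Consequently
$$h_x(s_n)=\tfrac12\dot x(s_n)^2-\tfrac{1}{8x(s_n)}=-\tfrac{1}{8x(s_n)}.$$

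It remains to show $x(s_n)\to 0$. Using $x(t_n)=0$ and the Cauchy--Schwartz inequality,
$$x(s_n)=\left|\int_{t_n}^{s_n}\dot x(t)\,dt\right|\le\sqrt{|s_n-t_n|}\,\|\dot x\|_{L^2(0,T)}\le\sqrt{|\overline t-t_n|}\,\|\dot x\|_{L^2(0,T)},$$
whose right-hand side tends to $0$ because $\dot x\in L^2$ (as $X\in H^1$) and $t_n\to\overline t$. Therefore $h_x(s_n)=-1/(8x(s_n))\to-\infty$, contradicting $|h_x|\le M$ on $[a,b]$.

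The main obstacle I anticipate is the localization step: I must ensure that the interval delivered by Proposition~\ref{extra double collision} really captures \emph{only} $\{1,2\}$-type collisions (so that every nearby collision time is genuinely a zero of $x$) and that $y,z$ stay uniformly bounded away from $0$ on it, so the only Kepler-type singularity in play is the one controlled by $h_x$. Once that preparatory bookkeeping is in place, the clash between the absolute continuity (hence boundedness) of $h_x$ on $[a,b]$ and the divergence of $-1/(8x(s_n))$ at the near-collision maxima gives the contradiction at once.
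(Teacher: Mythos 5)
Your proof is correct, and it reaches the contradiction by a genuinely different (and more elementary) mechanism than the paper. Both arguments share the same skeleton: argue by contradiction, localize to an interval $[a,b]$ around $\overline{t}$ on which only $\{1,2\}$-collisions can occur and on which $h_x$ is absolutely continuous (hence bounded) by Proposition~\ref{extra double collision}, and then produce a sequence of interior maxima $s_n$ of $x$ trapped between zeros of $x$ accumulating at $\overline{t}$. The difference is in how the contradiction is extracted at $s_n$. The paper works with the cluster moment of inertia $I_x=x^2$: at a maximum $\ddot{I}_x(s_n)\le 0$, while the generalized Lagrange--Jacobi identity $\ddot{I}_x=4h_x+2U_x+2x\,\partial U_0/\partial x$ forces $\ddot{I}_x(s_n)\to+\infty$ because $U_x(s_n)\to+\infty$ and the other terms stay bounded. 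You instead use only the first-order information $\dot{x}(s_n)=0$ to write $h_x(s_n)=-1/(8x(s_n))\to-\infty$, clashing directly with the boundedness of $h_x$; this avoids the Lagrange--Jacobi identity and the $\mathcal{C}^2$ regularity of $I_x$ altogether, at the price of nothing. (Your Cauchy--Schwartz step to show $x(s_n)\to 0$ is valid but slightly more than needed: since $s_n$ lies between $t_n$ and $\overline{t}$ you have $s_n\to\overline{t}$, and continuity of $x$ already gives $x(s_n)\to x(\overline{t})=0$.) Your preparatory localization --- shrinking $[a,b]$ so that $y,z\ge\eta>0$ there, so every collision time in $[a,b]$ is a zero of $x$ --- is exactly the bookkeeping the paper also performs, and restricting an absolutely continuous function to a subinterval preserves absolute continuity, so that step is sound.
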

\begin{proof}
Let $X$ a minimizer of $\left.\mathcal{A}\right|_\Omega$, $\overline{t}\in(0,T/6]$ an extra double collision  and $T_c(X)=\{t^*: x(t^*)y(t^*)z(t^*)=0\}$ the set of collision times. By contradiction, suppose that $\overline{t}$ is not isolated in $T_c(X)$. Without loss of generality, we can suppose that $x(\overline{t})=0$,  $y(\overline{t})>0$ and $z(\overline{t})>0$. By continuity of $X$, only collisions of $m_1,m_2$ can accumulate in $\overline{t}$. We can choose an interval $[a,b]$ such that $\overline{t}\in(a,b)$ and for all $t\in[a,b]$ there is no double collision between $m_1$ or $m_2$ and any other masses. $T_c$ is closed with zero measure, so there exists a sequence of compact intervals $([a_n,b_n])_{n\in\mathbb{N}}$ such that $a_n,b_n\rightarrow \overline{t}$ and

$$x(a_n)=x(b_n)=0 \quad\quad \text{and}\quad\quad x(t)> 0\quad\forall t\in(a_n,b_n).$$

The moment of inertia of the cluster could be written as $I_x(t)=I_x(X)(t)=\left\|x(t)\right\|^2$. Therefore, $I_x(a_n)=I_x(b_n)=0$ and $I_x(t)\neq0$ $\forall t\in(a_n,b_n)$. By proposition $\ref{without collisions}$, $X$ is a solution of octahedral symmetric six-body problem in $(a_n,b_n)$ and $I_x(X)(t)$ is $\mathcal{C}^2$ in $(a_n,b_n)$. If $s_n\in(a_n,b_n)$ denotes the maximum of $I_x$ in $[a_n,b_n]$,  $\ddot{I}_x(s_n)\leq0$. The  equation of motion tells us
$$\ddot{x}=\frac{\partial U}{\partial x}=\frac{\partial U_x}{\partial x}+\frac{\partial U_0}{\partial x},$$
and we have the generalized Lagrange-Jacobi relation
$$\begin{array}{cc}
\ddot{I}_x(t) & =4K_x+2x\left(\frac{\partial U_x}{\partial x}+\frac{\partial U_0}{\partial x}\right)\\
 & =4h_x(t)+2U_x+2x\frac{\partial U_0}{\partial x}.
\end{array}$$
But, by the last proposition $h_x$ is an absolutely continuous function in $[a,b]$, so $h_x$ and $x\frac{\partial U_0}{\partial x}$ are bounded and $U_x(t)\stackrel{t\rightarrow\overline{t}}{\longrightarrow}+\infty$. Then $\ddot{I}_x(s_n)\stackrel{n\rightarrow+\infty}{\longrightarrow}+\infty$, which is a contradiction. The corollary is proved. 
\end{proof}

Using proposition $\ref{extra double collision}$ and this corollary we will prove the following theorem.

\begin{thm} \label{thm extra double collision} A minimizer of $\left.\mathcal{A}\right|_\Omega$ does not have extra double collisions.
\end{thm}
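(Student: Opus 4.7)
The plan is to argue by contradiction: assume $X$ is a minimizer of $\mathcal{A}|_{\Omega}$ with an extra double collision at some $\bar t \in (0, T/6]$ of the cluster $\{m_1, m_2\}$, so $x(\bar t) = 0$ while $y(\bar t), z(\bar t) > 0$. By the corollary just proved, $\bar t$ is isolated in $T_c(X)$, so by continuity of $X$ one can pick a compact interval $[a,b] \subset (0, T/6)$ with $\bar t \in (a,b)$, $\bar t$ the only collision time in $[a,b]$, and $y(t), z(t) \geq \eta > 0$ throughout. As in Proposition \ref{total collision}, I would pass to $X|_{[0,T/6]}$ and seek to contradict its minimality inside $\Omega_{\frac{1}{6}}$.

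The first analytical input is a Sundman-type asymptotic at $\bar t$. Proposition \ref{extra double collision} provides the absolute continuity, hence boundedness, of $h_x$ on $[a,b]$, and the lower bound $y, z \geq \eta$ forces $\partial U_0/\partial x$ to be bounded there as well. Inserting this into $\ddot x = -\frac{1}{8x^2} + \partial U_0/\partial x$ yields, in direct analogy with (\ref{Sundman}),
$$x(t) = u_0 |t - \bar t|^{2/3} + O(|t-\bar t|), \qquad \dot x(t) = \pm \frac{2 u_0}{3} |t-\bar t|^{-1/3} + O(1),$$
with $u_0 > 0$ prescribed by the one-dimensional Kepler limit.

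The contradiction then comes from a local deformation modelled on the one used in Proposition \ref{total collision}. Taking a tent function $f_\varepsilon$ with plateau $[\bar t - \varepsilon^{3/2}, \bar t + \varepsilon^{3/2}]$, linear ramps of width $\varepsilon$, and support strictly inside $(a,b)$, I set $X^\varepsilon = X + \varepsilon f_\varepsilon(t)(1,0,0)$. This stays in $\Omega_{\frac{1}{6}}$ (the endpoint constraints at $0$ and $T/6$ are untouched and $X^\varepsilon \in S$ for small $\varepsilon$), pushes $x$ above $\varepsilon$ on the plateau, and coincides with $X$ outside $[a,b]$. Splitting $\Delta \mathcal{A}_{\frac{1}{6}} = A_1 + A_2 + A_3$ exactly as there, and applying the rescaling $|t-\bar t|^{2/3} = \varepsilon \tau$ to the $\frac{1}{8x}$ contribution on the plateau, I expect $A_1 \leq -C \varepsilon^{1/2} + O(\varepsilon)$ for some $C > 0$; the smooth summands of $U$ (the terms $\frac{1}{\sqrt{x^2+y^2}}$ and $\frac{1}{\sqrt{x^2+z^2}}$) generate only $O(\varepsilon)$ perturbations by a mean-value estimate using $y, z \geq \eta$, and $A_2, A_3$ are $O(\varepsilon)$ from the short ramps. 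Hence $\Delta \mathcal{A}_{\frac{1}{6}} < 0$ for small $\varepsilon$, contradicting the minimality of $X|_{[0,T/6]}$. The main obstacle I anticipate is this last estimate: unlike in Proposition \ref{total collision}, where one perturbs a specific homothetic orbit with an explicit profile, here one has only the Sundman expansion, and the $O(|t-\bar t|)$ remainder in $x(t)$ must be shown to absorb into the error term uniformly in $\varepsilon$ after the rescaling.
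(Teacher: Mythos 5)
Your proposal is correct and follows essentially the same route as the paper: a tent-function deformation $X+\varepsilon f_\varepsilon(t)(1,0,0)$ supported near the collision, Sundman estimates for $x$ at $\overline{t}$, and the splitting of $\Delta\mathcal{A}_{\frac{1}{6}}$ into potential and kinetic pieces giving a gain of $-C\varepsilon^{\frac{1}{2}}$ against $O(\varepsilon)$ costs. The only detail you gloss over is the endpoint case $\overline{t}=T/6$, where one cannot take $\overline{t}\in(a,b)\subset(0,T/6)$ and the paper instead uses a one-sided ramp and computes only $\Delta\mathcal{A}_{\frac{1}{6}}^-$.
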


\begin{proof}
We will make a small deformation of the hypothetical minimizer with a collision decreasing the action, similar to that we did in proposition $\ref{total collision}$. Suppose that there is a minimizer of $\left.\mathcal{A}\right|_\Omega$ with an extra double collision in the instant $\overline{t}\in(0,T/6)$ between the masses on the $x$-axis. By Sundman's estimates we have
$$x(t)=x_0|t-\overline{t}|^\frac{2}{3}+O(|t-\overline{t}|), \quad \text{and}\quad \dot{x}(t)=\frac{2}{3}x_0|t-\overline{t}|^{-\frac{1}{3}}+O(1),$$
as $t\rightarrow\overline{t}$, where $x_0>0$.  
For $\varepsilon>0$, define $f_\varepsilon:[0,T/6]\rightarrow\real_+$, by
$$f_\varepsilon(t)=\left\{\begin{array}{cc}
\frac{\varepsilon+\varepsilon^\frac{3}{2}+(t-\overline{t})}{\varepsilon},& t\in[\overline{t}-(\varepsilon^\frac{3}{2}+\varepsilon),\overline{t}-\varepsilon^\frac{3}{2}]\\
1,& t\in[\overline{t}-\varepsilon^\frac{3}{2},\overline{t}+\varepsilon^\frac{3}{2}]\\
\frac{\varepsilon+\varepsilon^\frac{3}{2}+(\overline{t}-t)}{\varepsilon},& t\in[\overline{t}+\varepsilon^\frac{3}{2},\overline{t}+\varepsilon^\frac{3}{2}+\varepsilon]\\
0,& t\in [0,\overline{t}-(\varepsilon^\frac{3}{2}+\varepsilon)]\cup[\overline{t}+\varepsilon^\frac{3}{2}+\varepsilon, T/6]
\end{array}.\right.$$

Set $X_\varepsilon(t)=(x_\varepsilon(t),y_\varepsilon(t),z_\varepsilon(t))$ where:
$$x_\varepsilon(t)=x(t)+\varepsilon f_\varepsilon(t),\quad y_\varepsilon(t)=y(t), \quad z_\varepsilon(t)=z(t).$$
Notice that $X_\varepsilon\in\Omega_\frac{1}{6}$. We will prove that $\mathcal{A}_\frac{1}{6}(X_\varepsilon)<\mathcal{A}_\frac{1}{6}(X|_{[0,T/6]})$, which is a contradiction. This difference beteween the action of $X_\varepsilon$ and that that of $X|_{[0,T/6]}$ can be written as
$$\Delta\mathcal{A}_\frac{1}{6}(\varepsilon)=\mathcal{A}_\frac{1}{6}(X_\varepsilon)-\mathcal{A}_\frac{1}{6}(X|_{[0,T/6]})=\Delta\mathcal{A}_\frac{1}{6}^+(\varepsilon)+\Delta\mathcal{A}_\frac{1}{6}^-(\varepsilon),$$
where
$$\Delta\mathcal{A}_\frac{1}{6}^-(\varepsilon)=\int^{\overline{t}}_{\overline{t}-(\varepsilon^\frac{2}{3}+\varepsilon)}(L(X_\varepsilon,\dot{X}_\varepsilon)-L(X,\dot{X}))(t)dt,$$ 
$$\Delta\mathcal{A}_\frac{1}{6}^+(\varepsilon)=\int_{\overline{t}}^{\overline{t}+(\varepsilon^\frac{2}{3}+\varepsilon)}(L(X_\varepsilon,\dot{X}_\varepsilon)-L(X,\dot{X}))(t)dt,$$

As in proposition $\ref{total collision}$, we decompose $\Delta\mathcal{A}_\frac{1}{6}^+(\varepsilon)$ as the sum of three terms:
$$\Delta\mathcal{A}_\frac{1}{6}^+(\varepsilon)=A_1^+(\varepsilon)+A_2^+(\varepsilon)+A_3^+(\varepsilon),$$
$$A_1^+(\varepsilon)=\int_{\overline{t}}^{\overline{t}+\varepsilon^\frac{3}{2}}(U(X_\varepsilon)-U(X))(t)dt,$$
$$A_2^+(\varepsilon)=\int^{\overline{t}+\varepsilon^\frac{3}{2}+\varepsilon}_{\overline{t}+\varepsilon^\frac{3}{2}}(U(X_\varepsilon)-U(X))(t)dt,$$
$$A_3^+(\varepsilon)=\int^{\overline{t}+\varepsilon^\frac{3}{2}+\varepsilon}_{\overline{t}+\varepsilon^\frac{3}{2}}(K(X_\varepsilon,\dot{X}_\varepsilon)-K(X,\dot{X}))(t)dt.$$
To estimate $A_1^+$, observe that $A_1^+(\varepsilon)\leq \overline{A}_1^+(\varepsilon)$, where we set
$$\overline{A}_1^+(\varepsilon)=\frac{1}{8}\int_{\overline{t}}^{\overline{t}+\varepsilon^\frac{3}{2}}\left(\frac{1}{x(t)+\varepsilon}-\frac{1}{x(t)}\right)dt.$$
with $x(t)=x_0|t-\overline{t}|^\frac{2}{3}+O(|t-\overline{t}|).$

Changing the variables $t=\overline{t}+(\varepsilon\tau)^\frac{3}{2}$, and making the same computation as in proposition $\ref{total collision}$, we get the existence of a constant $k_+>0$, independent of $\varepsilon$, such that 
$$\overline{A}_1^+(\varepsilon)=-k_+\varepsilon^\frac{1}{2}+O(\varepsilon),$$
so $\overline{A}_1^+(\varepsilon)<0$ for $\varepsilon$ sufficiently small. Since $x(t)$ and $f_\varepsilon(t)$ are positive, we have
$$A_2^+(\varepsilon)=\frac{1}{8}\int^{\overline{t}+\varepsilon^\frac{3}{2}+\varepsilon}_{\overline{t}+\varepsilon^\frac{3}{2}}\left(\frac{1}{x(t)+\varepsilon f_\varepsilon(t)}-\frac{1}{x(t)}\right)dt+\int^{\overline{t}+\varepsilon^\frac{3}{2}+\varepsilon}_{\overline{t}+\varepsilon^\frac{3}{2}}\left(U_0(X_\varepsilon(t))-U_0(X(t))\right)dt.$$
Hence $$A_2^+\leq O(\varepsilon^2).$$
Concerning $A_3^+$ we have
$$A_3^+(\varepsilon)=\int^{\overline{t}+\varepsilon^\frac{3}{2}+\varepsilon}_{\overline{t}+\varepsilon^\frac{3}{2}}(||\dot{X}_\varepsilon||^2-||\dot{X}||^2)dt=\int^{\overline{t}+\varepsilon^\frac{3}{2}+\varepsilon}_{\overline{t}+\varepsilon^\frac{3}{2}}(-2\dot{x}(t)+1)dt\leq O(\varepsilon).$$
We conclude that $\Delta\mathcal{A}_\frac{1}{6}^+\leq-k_+\varepsilon^\frac{1}{2}+O(\varepsilon)$. Analogously, $\Delta\mathcal{A}_\frac{1}{6}^-\leq-k_-\varepsilon^\frac{1}{2}+O(\varepsilon)$, so $$\Delta\mathcal{A}_\frac{1}{6}\leq-(k_-+k_+)\varepsilon^\frac{1}{2}+O(\varepsilon),$$
and we have a contradiction.

Notice that if $\overline{t}=T/6$ we can define $f_\varepsilon(t)$ as
$$f_\varepsilon(t)=\left\{\begin{array}{cc}
0,& t\in [0,T/6-(\varepsilon^\frac{3}{2}+\varepsilon)]\\
\frac{\varepsilon+\varepsilon^\frac{3}{2}+(t-T/6)}{\varepsilon},& t\in[T/6-(\varepsilon^\frac{3}{2}+\varepsilon),T/6-\varepsilon^\frac{3}{2}]\\
1,& t\in[T/6-\varepsilon^\frac{3}{2},T/6]\\
\end{array},\right.$$
and repeat the argument above. We must compute only $\Delta\mathcal{A}_\frac{1}{6}^-(\varepsilon)$. This finishes our prove.
\end{proof}

In order to conclude our study of the collisions in the octahedral symmetric six-body problem we have to prove that there is no \textit{quadruple collision}, and that the double collisions are regularized. To prove the there is no quadruple collision we follow the method used to prove that there is no extra double collisions.

We argue by contradiction. First of all we remark that at $t=0$ we have $x(0)=0,y(0)=z(0)$, therefore we can have only a double collision or a total collision. We have already prove that a minimizer is free of total collisions. Hence at $t=0$ we have necessarily a double collision. The same argument holds for $t=T/3$ and $t=2T/3$. Assume now, by contradiction, that we have a quadruple collision at time $\overline{t}$. By symmetry we can assume that $\overline{t}\in(0,T/6]$. Without loss of generality, we will say that this quadruple collision is of the cluster $\{1,2,3,4\}$, that is to say: $x(\overline{t)}=y(\overline{t})$ and $z(\overline{t})>0$. The lagrangian and energy of the cluster are
$$L_{x,y}=\frac{1}{2}(\dot{x}^2+\dot{y}^2)+\frac{1}{8}\left(\frac{1}{x}+\frac{1}{y}\right)+\frac{1}{\sqrt{x^2+y^2}},$$
$$h_{x,y}=\frac{1}{2}(\dot{x}^2+\dot{y}^2)-\frac{1}{8}\left(\frac{1}{x}+\frac{1}{y}\right)-\frac{1}{\sqrt{x^2+y^2}}.$$
With this notations, we will prove the following proposition.

\begin{prop}\label{h_{x,y} abs cont}Let $X=(x,y,z)$ a minimizer of $\mathcal{A}|_\Omega$ and $\overline{t}\in(0,T/6]$ the instant of a quadruple collision of $m_1,m_2,m_3,m_4$. There is an interval, $[a,b]$, such that
\begin{equation}\label{flor} \overline{t}\in[a,b]\subset(0,T/6]
\end{equation} 
and $t\mapsto h_{x,y}=h_{x,y}(X(t),\dot{X}(t))$ is an absolutely continuous function in the interval $[a,b]$. 
\end{prop}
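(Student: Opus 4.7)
The plan is to mimic the proof of Proposition \ref{extra double collision}, but applying a time reparametrization to both coordinates $x$ and $y$ simultaneously while leaving $z$ untouched. This is the natural analogue, since the colliding cluster is now $\{m_1,m_2,m_3,m_4\}$, and only the two ``cluster variables'' $x$ and $y$ should be perturbed.

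Picking the interval comes first. Since $X$ is continuous, $z(\overline t)>0$, and $T_c(X)$ is closed of measure zero, there is a compact interval $[a,b]\subset(0,T/6]$ with $\overline t\in[a,b]$ on which $z(t)>0$. On such an interval the only singularities of $U$ come from the cluster potential $U_{x,y}=\frac{1}{8}\bigl(\frac{1}{x}+\frac{1}{y}\bigr)+\frac{1}{\sqrt{x^2+y^2}}$, while the complementary potential $U_0=\frac{1}{8z}+\frac{1}{\sqrt{x^2+z^2}}+\frac{1}{\sqrt{y^2+z^2}}$ and its partial derivatives $\partial_x U_0,\partial_y U_0$ are continuous and bounded on $[a,b]$.

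Next, for $\delta\in\mathcal{C}^1([0,T/6],\real)$ with support in $(a,b)$, I set $t_\varepsilon(\tau)=\tau+\varepsilon\delta(\tau)$, denote its inverse by $\tau_\varepsilon$, and define $X_\varepsilon(\tau)=\bigl(x(t_\varepsilon(\tau)),\,y(t_\varepsilon(\tau)),\,z(\tau)\bigr)$. Since $\delta$ vanishes outside $(a,b)\subset(0,T/6)$, $X_\varepsilon\in\Omega_{1/6}$. I split the action as $\mathcal{A}_{1/6}(X_\varepsilon)=\mathcal{A}_{1/6}^{x,y}(X_\varepsilon)+\mathcal{A}_{1/6}^{0}(X_\varepsilon)$, where the first term collects $\frac{1}{2}(\dot x^2+\dot y^2)$ with $U_{x,y}$ and the second collects $\frac{1}{2}\dot z^2$ with $U_0$. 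After the change of variables $\tau=\tau_\varepsilon(t)$ in the cluster piece, exactly as in Proposition \ref{extra double collision}, one obtains $\mathcal{A}_{1/6}^{x,y}(X_\varepsilon)=\int_0^{T/6}\Bigl[\frac{1}{2}\frac{\dot x(t)^2+\dot y(t)^2}{d\tau_\varepsilon/dt}+U_{x,y}(X(t))\,\frac{d\tau_\varepsilon}{dt}\Bigr]dt$. Using the expansion \eqref{:)} together with $x(t_\varepsilon(\tau))=x(\tau)+\varepsilon\delta(\tau)\dot x(\tau)+O_\varepsilon(\varepsilon)$ (and similarly for $y$) inside $U_0$, differentiating at $\varepsilon=0$ yields $\left.\frac{d}{d\varepsilon}\right|_{\varepsilon=0}\mathcal{A}_{1/6}(X_\varepsilon)=\int_a^b\bigl[h_{x,y}\,\dot\delta+\bigl(\dot x\,\partial_x U_0(X)+\dot y\,\partial_y U_0(X)\bigr)\delta\bigr]dt$.

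By minimality of $X|_{[0,T/6]}$, this derivative vanishes for every admissible $\delta$, so the Fundamental Lemma of the Calculus of Variations (see \cite{Young}) gives absolute continuity of $t\mapsto h_{x,y}(t)$ on $[a,b]$, together with the weak identity $\dot h_{x,y}=\dot x\,\partial_x U_0(X)+\dot y\,\partial_y U_0(X)$ a.e.\ on $[a,b]$. The main obstacle is justifying differentiation under the integral sign as $\varepsilon\to 0$: the cluster piece is singular at the collision times, so one must use that the finite-action hypothesis forces $h_{x,y}$ to be integrable on $[a,b]$, and that $\partial_x U_0,\partial_y U_0$ stay bounded there by the lower bound on $z$. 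Once this is in place, the argument is essentially a carbon copy of Proposition \ref{extra double collision}, with the single-coordinate reparametrization upgraded to a joint reparametrization of $(x,y)$.
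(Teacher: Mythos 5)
Your proposal is correct and follows essentially the same route as the paper: a joint time reparametrization of $(x,y)$ with $z$ fixed, splitting the action into the cluster piece $K_{x,y}+U_{x,y}$ and the complementary piece, and then the Fundamental Lemma applied to $\int_a^b\bigl(h_{x,y}\dot\delta+(\dot x\,\partial_x+\dot y\,\partial_y)(\text{complementary potential})\,\delta\bigr)dt=0$. The only discrepancy is notational: what you call $U_0$ is the paper's $U_1=\frac{1}{\sqrt{x^2+z^2}}+\frac{1}{\sqrt{y^2+z^2}}+\frac{1}{8z}$ (the paper reserves $U_0$ for the single-axis cluster in Proposition \ref{extra double collision}).
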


\begin{proof}
Since $X$ is a minimizer of $\mathcal{A}|_\Omega$, $X|_{[0,T/6]}$ is a minimizer of $\mathcal{A}|_{\Omega_{\frac{1}{6}}}=\mathcal{A}_\frac{1}{6}$. So the function $h_{x,y}(t)=h_{x,y}(X(t),\dot{X}(t))$ is well defined on a subset of $[0,T/6]$ with full measure as in $(\ref{flor})$, and such that only quadruple collisions of the masses on the $x,y$-axis occur at $t\in[a,b]$. Let $\delta$ be a function of class $\mathcal{C}^1$ with support contained in $[a,b]$. Let
$$t_\varepsilon:[0,T/6]\rightarrow[0,T/6], \quad\quad \tau\mapsto t_\varepsilon(\tau)=\tau+\varepsilon\delta(\tau).$$
As in proposition $\ref{extra double collision}$, $t_\varepsilon$ is monotonic and bijective if $\varepsilon>0$ is small, and the equations $(\ref{pre :)}),(\ref{:)})$ hold.

Let $X:[0,T/6]\rightarrow S, \quad X_\varepsilon(\tau)=(x(t_\varepsilon),y(t_\varepsilon),z(\tau))$ a variation of $X|_{[0,\frac{T}{6}]}$. Define
$$\begin{array}{c}
\mathcal{A}_\frac{1}{6}^{x,y}(X_\varepsilon)  =\int_0^\frac{T}{6}\left(K_{x,y}(X_\varepsilon(t_\varepsilon(\tau)))+U_{x,y}(X_\varepsilon(t_\varepsilon(\tau)))\right)d\tau\\
 =\int_0^\frac{T}{6}\left(\frac{1}{2}\left(\dot{x}(t_\varepsilon(\tau))^2+\dot{y}(t_\varepsilon(\tau))^2\right)+\frac{1}{8x(t_\varepsilon(\tau))}+\frac{1}{8y(t_\varepsilon(\tau))}+\frac{1}{\sqrt{x(t_\varepsilon(\tau))^2+y(t_\varepsilon(\tau))^2}}\right)d\tau;
\end{array}$$
and
$$\mathcal{A}_\frac{1}{6}^1(X_\varepsilon)=\int_0^\frac{T}{6}\left(K_1(X_\varepsilon(t_\varepsilon(\tau)))+U_1(X_\varepsilon(t_\varepsilon(\tau)))\right)d\tau,$$
where we set
$$K_1=\frac{1}{2}\dot{z}^2,$$
$$U_1=\left(\frac{1}{\sqrt{x^2+z^2}}+\frac{1}{\sqrt{y^2+z^2}}\right)+\frac{1}{8z}.$$
After making the change of variables $\tau=\tau_\varepsilon(t)$ in $\mathcal{A}_\frac{1}{6}^{x,y}(X_\varepsilon)$, we have
$$\mathcal{A}_\frac{1}{6}^{x,y}(X_\varepsilon)=\int_0^\frac{T}{6}\left[\frac{1}{2}\left(\dot{x}(t)^2+\dot{y}(t)^2\right)\frac{1}{\frac{d\tau_\varepsilon}{dt}}+\left(\frac{1}{8x(t)}+\frac{1}{8y(t)}+\frac{1}{\sqrt{x(t)^2+y(t)^2}}\right)\frac{d\tau_\varepsilon}{dt}\right]dt.$$
Since $\left.X\right|_{[0,T/6]}$ is a minimizer of $\left.\mathcal{A}_\frac{1}{6}\right|_\Omega$ and $\varepsilon\rightarrow \mathcal{A}_\frac{1}{6}(X_\varepsilon)$ is a differentiable function, using $(\ref{:)})$ we obtain
$$\left.\frac{d}{d\varepsilon}\mathcal{A}_\frac{1}{6}(X_\varepsilon)\right|_{\varepsilon=0}=\int_a^b\left(h_{x,y}\dot{\delta}+\left(\frac{\partial U_1}{\partial x}\dot{x}+\frac{\partial U_1}{\partial y}\dot{y}\right)\delta\right)dt=0\quad\quad \forall \delta.$$
As in proposition $\ref{extra double collision}$, we use the Fundamental Lemma of the Calculus of Variations,  in order to conclude that $t\mapsto h_{x,y}(t)$ is absolutely continuous and also
$$\dot{h}_{x,y}(t)=\nabla_{x,y}U_1(X(t))\cdot(\dot{x},\dot{y}),$$
where we set $$\nabla_{x,y}U_1(X(t))=\left(\frac{\partial U_1}{\partial x},\frac{\partial U_1}{\partial y}\right).$$
\end{proof}

\begin{cor}Let $X=(x,y,z)\in\Omega$ be a minimizer and $\overline{t}\in[0,T/6]$ an instant of quadruple collision, say $x(\overline{t})=y(\overline{t})=0$. Then $\overline{t}$ is isolated in the set of collision times, $T_c(X)$.
\end{cor}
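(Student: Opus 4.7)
The plan is to mimic closely the isolation-of-collisions corollary that follows Proposition \ref{extra double collision}, upgrading the one-body cluster $\{m_1,m_2\}$ to the four-body cluster $\{m_1,m_2,m_3,m_4\}$ and invoking Proposition \ref{h_{x,y} abs cont} in place of Proposition \ref{extra double collision}. Argue by contradiction: assume $\overline{t}$ is an accumulation point of $T_c(X)$. Since $z(\overline{t})>0$, by continuity $z$ remains bounded away from zero on some neighborhood of $\overline{t}$, so every collision time sufficiently close to $\overline{t}$ satisfies $x=0$ or $y=0$. Theorem \ref{thm extra double collision} excludes extra double collisions and Proposition \ref{total collision} excludes total collisions, so on some interval $[a,b]\ni\overline{t}$ (which we may take as in Proposition \ref{h_{x,y} abs cont}) every collision time actually satisfies $x=y=0$.

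Next, introduce the cluster moment of inertia $I_{x,y}(t)=x(t)^2+y(t)^2$. Since $T_c(X)$ is closed with Lebesgue measure zero and $\overline{t}$ is an accumulation point, exactly as in the earlier corollary one extracts a sequence of compact subintervals $[a_n,b_n]\subset[a,b]$ with $a_n,b_n\to\overline{t}$ and
$$I_{x,y}(a_n)=I_{x,y}(b_n)=0,\qquad I_{x,y}(t)>0\text{ for all }t\in(a_n,b_n).$$
By Proposition \ref{without collisions}, $X$ solves the Newtonian equations on each $(a_n,b_n)$, so $I_{x,y}$ is $\mathcal{C}^2$ there. Let $s_n\in(a_n,b_n)$ be a point where $I_{x,y}$ attains its maximum; then $\ddot{I}_{x,y}(s_n)\leq 0$.

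The computational heart of the argument is a generalized Lagrange--Jacobi identity for the cluster. Splitting $U=U_{x,y}+U_1$ and using the fact that $U_{x,y}=\tfrac{1}{8x}+\tfrac{1}{8y}+\tfrac{1}{\sqrt{x^2+y^2}}$ is homogeneous of degree $-1$ in $(x,y)$, Euler's identity gives $x\partial_x U_{x,y}+y\partial_y U_{x,y}=-U_{x,y}$, so a direct differentiation yields
$$\ddot{I}_{x,y}(t)=4h_{x,y}(t)+2U_{x,y}(X(t))+2\bigl(x\partial_x U_1+y\partial_y U_1\bigr)(X(t)).$$
By Proposition \ref{h_{x,y} abs cont}, $h_{x,y}$ is absolutely continuous on $[a,b]$ and hence bounded. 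Since $z$ stays bounded away from zero near $\overline{t}$, each summand of $x\partial_x U_1+y\partial_y U_1$ (for instance $-x^2/(x^2+z^2)^{3/2}$) is bounded on a neighborhood of $\overline{t}$. On the other hand $(x(s_n),y(s_n))\to(0,0)$, whence $U_{x,y}(X(s_n))\to+\infty$. Therefore $\ddot{I}_{x,y}(s_n)\to+\infty$, contradicting $\ddot{I}_{x,y}(s_n)\leq 0$.

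The only non-routine ingredient is Proposition \ref{h_{x,y} abs cont}, which is already in hand; the structural novelty compared with the earlier corollary is the use of Euler's identity for the two-variable homogeneous potential $U_{x,y}$, and the observation that $U_1$ is smooth (in particular its radial derivative along $(x,y)$ is bounded) in the region $z>0$. Everything else, including the extraction of the intervals $[a_n,b_n]$ from the density of non-collision times, is parallel to the analogous isolation corollary for extra double collisions, and I do not anticipate any additional obstacle.
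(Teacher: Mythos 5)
Your argument is correct and follows essentially the same route as the paper: contradiction via the cluster moment of inertia $I_{x,y}=x^2+y^2$, extraction of the intervals $[a_n,b_n]$, the generalized Lagrange--Jacobi identity, and the boundedness of $h_{x,y}$ from Proposition \ref{h_{x,y} abs cont} against the blow-up of $U_{x,y}$. Your explicit appeal to Euler's identity and to $z$ being bounded away from zero merely fills in details the paper leaves implicit.
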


\begin{rem} As we already proved in theorem $\ref{thm extra double collision}$, double collisions can not accumulate in a quadruple colllision and there is no extra double collision! We have proved also that there is no total colission. So, we can choose a neighborhood of $\overline{t}$ such that there is no double collision such that $x=0$ or $y=0$. 
\end{rem}

\begin{proof}
Let $X$ be a minimizer of $\left.\mathcal{A}\right|_\Omega$, $\overline{t}\in(0,T/6)$ a quadruple collision time at which $x=y=0$ and $T_c(X)=\{t^*: x(t^*)y(t^*)z(t^*)=0\}$. By contradiction, suppose that $\overline{t}$ is not isolated in $T_c(X)$. Notice, by the remark above, that there is no possible acumulations of extra double collisions or total collisions to a quadruple collision. So, only collisions of $m_1,m_2,m_3,m_4$ can accumulate in $\overline{t}$,  

we can choose an interval $[a,b]$ such that $\overline{t}\in(a,b)$ and for all $t\in[a,b]$ there is no collision of $m_1$, $m_2$, $m_3$ or $m_4$ with the others masses, $m_5$, $m_6$. We know  that $T_c$ is closed and has zero measure. So there exists a sequence of compact intervals $([a_n,b_n])_{n\in\mathbb{N}}$ such that $a_n,b_n\rightarrow \overline{t}$ and
$$x(a_n)=x(b_n)=y(a_n)=y(b_n)=0 \quad\quad \text{and}\quad\quad I_{x,y}(t)\neq 0\quad\forall t\in(a_n,b_n),$$
where $I_{x,y}(t)$ is the moment of inertia of the cluster, which can be written as $I_{x,y}(t)=I_{x,y}(X(t))=x(t)^2+y(t)^2$. Therefore, $I_{x,y}(a_n)=I_{x,y}(b_n)=0$ and $I_{x,y}(t)\neq0$ $
\forall t\in(a_n,b_n)$. By proposition $\ref{without collisions}$, $X$ is a solution of the octahedral symmetric six-body problem in $(a_n,b_n)$ and $I_{x,y}(X(t))$ is $\mathcal{C}^2$ in $(a_n,b_n)$. If $s_n\in(a_n,b_n)$ denotes the maximum of $I_{x,y}$ in $[a_n,b_n]$, we have $\ddot{I}_{x,y}(s_n)\leq0$. The equations of motion tells us that

$$(\ddot{x},\ddot{y})=\nabla_{x,y}U_{x,y}+\nabla_{x,y}U_1.$$
And with this we have the generalized Lagrange-Jacobi relation

\begin{align}
\ddot{I}_{x,y}(t) & =4K_{x,y}+2(x,y)\cdot\left(\nabla_{x,y}U_{x,y}+\nabla_{x,y}U_1\right)\nonumber\\
 & =4h_{x,y}(t)+2U_{x,y}+2(x,y)\cdot\nabla_{x,y}U_1.\nonumber
\end{align}
But, according to the proposition $\ref{h_{x,y} abs cont}$, $h_{x,y}$ is absolutely continuous function on $[a,b]$, so $h_{x,y}$ and $(x,y)\cdot\nabla_{x,y}U_1$ are bounded and $U_{x,y}(t)\stackrel{t\rightarrow\overline{t}}{\longrightarrow}+\infty$. Then $I_{x,y}(s_n)\stackrel{n\rightarrow+\infty}{\longrightarrow}+\infty$, ta contradiction. The corollary is proved.
\end{proof}

\begin{thm} A minimizer of $\mathcal{A}|_\Omega$ does not have a quadruple collision.
\end{thm}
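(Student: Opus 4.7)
The plan is to argue by contradiction, following closely the deformation argument used in the preceding Theorem on extra double collisions, but now perturbing two coordinates simultaneously. Suppose $X=(x,y,z)$ is a minimizer of $\mathcal{A}|_\Omega$ with a quadruple collision at some $\overline{t}\in(0,T/6]$; by symmetry, we may assume $x(\overline{t})=y(\overline{t})=0$ and $z(\overline{t})>0$. By the previous Corollary and Remark, there is a neighborhood of $\overline{t}$ on which $\overline{t}$ is the only collision time and no mass in the cluster $\{1,2,3,4\}$ collides with $m_5$ or $m_6$.

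The first step is to extract Sundman-type asymptotics for the cluster. Using Proposition \ref{h_{x,y} abs cont}, the function $h_{x,y}$ is absolutely continuous near $\overline{t}$, so the cluster behaves like an isolated $4$-body system with controlled energy. Combined with the equations of motion and the generalized Lagrange-Jacobi identity, this gives
$$x(t)=x_0|t-\overline{t}|^{2/3}+O(|t-\overline{t}|),\quad y(t)=y_0|t-\overline{t}|^{2/3}+O(|t-\overline{t}|),$$
with $x_0,y_0>0$, and analogous estimates $\dot{x}(t)=\tfrac{2}{3}x_0|t-\overline{t}|^{-1/3}+O(1)$, similarly for $\dot{y}$.

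Next, define the same bump function $f_\varepsilon$ as in the previous Theorem, supported symmetrically around $\overline{t}$, and set
$$X_\varepsilon(t)=\bigl(x(t)+\varepsilon f_\varepsilon(t),\; y(t)+\varepsilon f_\varepsilon(t),\; z(t)\bigr).$$
Perturbing $x$ and $y$ simultaneously regularizes all three singular pieces of $U$ that blow up at $\overline{t}$: the terms $\tfrac{1}{8x}$, $\tfrac{1}{8y}$ and $\tfrac{1}{\sqrt{x^2+y^2}}$. One verifies $X_\varepsilon\in\Omega_{\frac{1}{6}}$ (positivity, symmetry at endpoints, and boundary conditions $x_\varepsilon(0)=0$, $y_\varepsilon(0)=z_\varepsilon(0)$, $x_\varepsilon(T/6)=y_\varepsilon(T/6)$ are preserved since the support of $f_\varepsilon$ sits strictly inside $(0,T/6)$, and in the edge case $\overline{t}=T/6$ one uses a one-sided $f_\varepsilon$ exactly as in the previous Theorem).

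Decompose $\Delta\mathcal{A}_{\frac{1}{6}}(\varepsilon)=\Delta\mathcal{A}_{\frac{1}{6}}^-(\varepsilon)+\Delta\mathcal{A}_{\frac{1}{6}}^+(\varepsilon)$ and, in turn, $\Delta\mathcal{A}_{\frac{1}{6}}^+=A_1^++A_2^++A_3^+$ just as before. For $A_1^+$, dominate by the three singular terms, perform the change of variables $t=\overline{t}+(\varepsilon\tau)^{3/2}$ and apply the Sundman asymptotics termwise; each of the three contributions is of the same shape as the scalar Kepler integral in the previous Theorem, so each yields a strictly negative $-\varepsilon^{1/2}$ leading term. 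Summing, $A_1^+(\varepsilon)=-k_+\varepsilon^{1/2}+O(\varepsilon)$ with $k_+>0$. The remaining pieces $A_2^+$ and $A_3^+$ are $O(\varepsilon)$ by the same routine estimates (on $[\overline{t}+\varepsilon^{3/2},\overline{t}+\varepsilon^{3/2}+\varepsilon]$ the integrand is bounded by $\dot{x}+\dot{y}+\text{const}$, and $\int|\dot{x}|\,dt,\int|\dot{y}|\,dt=O(\varepsilon)$ from the Sundman bound). A symmetric analysis gives $\Delta\mathcal{A}_{\frac{1}{6}}^-(\varepsilon)=-k_-\varepsilon^{1/2}+O(\varepsilon)$, and therefore
$$\Delta\mathcal{A}_{\frac{1}{6}}(\varepsilon)\leq -(k_-+k_+)\varepsilon^{1/2}+O(\varepsilon)<0$$
for all sufficiently small $\varepsilon>0$, contradicting minimality. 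The main obstacle is the first step: justifying $x_0,y_0>0$ in the Sundman estimates from only the absolute continuity of $h_{x,y}$, which requires exploiting the Lagrange-Jacobi identity together with the bounds on $(x,y)\cdot\nabla_{x,y}U_1$ already used in the preceding Corollary to rule out degenerate collapse rates; once both leading coefficients are positive, the rest of the argument is a direct adaptation of the previous Theorem.
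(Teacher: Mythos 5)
Your proposal is correct and follows essentially the same route as the paper: the same contradiction setup, the same bump function $f_\varepsilon$ applied simultaneously to $x$ and $y$, the same decomposition $\Delta\mathcal{A}_{\frac{1}{6}}^{\pm}=A_1^{\pm}+A_2^{\pm}+A_3^{\pm}$, and the same $-k_\pm\varepsilon^{1/2}+O(\varepsilon)$ estimate (the paper bounds $A_1^+$ using only the $\tfrac{1}{8x}$ term, whereas you sum the three singular contributions, but this is immaterial). The step you single out as the main obstacle, namely the Sundman asymptotics with $x_0,y_0>0$ for the collapsing cluster, is simply asserted in the paper as well, so you are not missing anything the paper supplies.
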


\begin{proof}
We will make a small deformation of the hypothetical minimizer with a collision which decreases the action, similar to what we did in propositions $\ref{total collision}$ and $\ref{thm extra double collision}$. Notice that by the symmetry of the problem a quadruple collision in $\{t\in\mathbb{R}:\quad t\in \frac{T}{3}\mathbb{Z}\}$ means a total collision. From proposition \ref{total collision} it can not happen. Moreover, by theorem \ref{thm extra double collision} there is no collision in $(0,T/3)$. Suppose that there is a minimizer of $\left.\mathcal{A}\right|_\Omega$ with an extra quadruple collision at the instant $\overline{t}\in(0,T/6)$ among the masses on the $x,y$-axis. By Sundman's estimates we have
$$\left\{\begin{array}{cccc}
x(t)=x_0|t-\overline{t}|^\frac{2}{3}+O(t-\overline{t}), & \text{and} & \dot{x}(t)=\frac{2}{3}x_0|t-\overline{t}|^{-\frac{1}{2}}+O(1)\\
y(t)=y_0|t-\overline{t}|^\frac{2}{3}+O(t-\overline{t}), & \text{and} & \dot{y}(t)=\frac{2}{3}y_0|t-\overline{t}|^{-\frac{1}{2}}+O(1)
\end{array},
\right.
$$
as $t\rightarrow\overline{t}$, where $x_0,y_0>0$.  
For $\varepsilon>0$, define $f_\varepsilon:[0,T/6]\rightarrow\real_+$, where
$$f_\varepsilon(t)=\left\{\begin{array}{cc}
\frac{\varepsilon+\varepsilon^\frac{3}{2}+(t-\overline{t})}{\varepsilon},& t\in[\overline{t}-(\varepsilon^\frac{3}{2}+\varepsilon),\overline{t}-\varepsilon^\frac{3}{2}]\\
1,& t\in[\overline{t}-\varepsilon^\frac{3}{2},\overline{t}+\varepsilon^\frac{3}{2}]\\
\frac{\varepsilon+\varepsilon^\frac{3}{2}+(\overline{t}-t)}{\varepsilon},& t\in[\overline{t}+\varepsilon^\frac{3}{2},\overline{t}+\varepsilon^\frac{3}{2}+\varepsilon]\\
0,& t\in [0,\overline{t}-(\varepsilon^\frac{3}{2}+\varepsilon)]\cup[\overline{t}+\varepsilon^\frac{3}{2}+\varepsilon, T/6]
\end{array}.\right.$$
Set $X_\varepsilon(t)=(x_\varepsilon(t),y_\varepsilon(t),z_\varepsilon(t))$ where:
$$x_\varepsilon(t)=x(t)+\varepsilon f_\varepsilon(t),\quad y_\varepsilon(t)=y(t) +\varepsilon f_\varepsilon(t), \quad z_\varepsilon(t)=z(t),$$
Notice that $X_\varepsilon\in\Omega_\frac{1}{6}$. We will prove that $\mathcal{A}_\frac{1}{6}(X_\varepsilon)<\mathcal{A}_\frac{1}{6}(X|_{[0,T/6]})$, and it is a contradiction. This difference can be written as
$$\Delta\mathcal{A}_\frac{1}{6}(\varepsilon)=\mathcal{A}_\frac{1}{6}(X_\varepsilon)-\mathcal{A}_\frac{1}{6}(X|_{[0,T/6]})=\Delta\mathcal{A}_\frac{1}{6}^+(\varepsilon)+\Delta\mathcal{A}_\frac{1}{6}^-(\varepsilon),$$
where $\Delta\mathcal{A}_\frac{1}{6}^+(\varepsilon)$ and $\Delta\mathcal{A}_\frac{1}{6}^-(\varepsilon)$ are defined like in theorem $\ref{thm extra double collision}$. Again

\begin{align}
\Delta\mathcal{A}_\frac{1}{6}^+(\varepsilon)=&A_1^+(\varepsilon)+A_2^+(\varepsilon)+A_3^+(\varepsilon),\nonumber\\
A_1^+(\varepsilon)=&\int_{\overline{t}}^{\overline{t}+\varepsilon^\frac{3}{2}}(U(X_\varepsilon)-U(X))(t)dt,\nonumber\\
A_2^+(\varepsilon)=&\int^{\overline{t}+\varepsilon^\frac{3}{2}+\varepsilon}_{\overline{t}+\varepsilon^\frac{3}{2}}(U(X_\varepsilon)-U(X))(t)dt,\nonumber\\
A_3^+(\varepsilon)=&\int^{\overline{t}+\varepsilon^\frac{3}{2}+\varepsilon}_{\overline{t}+\varepsilon^\frac{3}{2}}(K(X_\varepsilon,\dot{X}_\varepsilon)-K(X,\dot{X}))(t)dt.\nonumber
\end{align}

To estimate $A_1^+$, observe that $A_1^+(\varepsilon)\leq \overline{A}_1^+(\varepsilon)$, since $f_\varepsilon(t)\geq0$, where
$$\overline{A}_1^+(\varepsilon)=\frac{1}{8}\int_{\overline{t}}^{\overline{t}+\varepsilon^\frac{3}{2}}\left(\frac{1}{x_0|t-\overline{t}|^\frac{2}{3}+\varepsilon+O(|t-\overline{t}|)}-\frac{1}{x_0|t-\overline{t}|^\frac{2}{3}+O(|t-\overline{t}|)}\right)dt.$$
We have proved in theorem $\ref{thm extra double collision}$ that $\overline{A}_1^+(\varepsilon)=-k_+\varepsilon^\frac{1}{2}+O(\varepsilon)$, where $k_+$ is a positive constant. Since $x(t)$, $y(t)$ and $f_\varepsilon(t)$ are positive, we have

$$
A_2^+(\varepsilon)\leq\frac{1}{8}\int^{\overline{t}+\varepsilon^\frac{3}{2}+\varepsilon}_{\overline{t}+\varepsilon^\frac{3}{2}}\left(\frac{1}{x(t)+\varepsilon f_\varepsilon(t)}+\frac{1}{y(t)+\varepsilon f_\varepsilon(t)}-\left(\frac{1}{x(t)}+\frac{1}{y(t)}\right)\right)dt$$
$$ +\int^{\overline{t}+\varepsilon^\frac{3}{2}+\varepsilon}_{\overline{t}+\varepsilon^\frac{3}{2}}\left(U_1(X_\varepsilon(t))-U_1(X(t))\right)dt,$$
$$A_2^+\leq O(\varepsilon).$$
For $A_3^+$ we have
$$A_3^+(\varepsilon)=\int^{\overline{t}+\varepsilon^\frac{3}{2}+\varepsilon}_{\overline{t}+\varepsilon^\frac{3}{2}}(||\dot{X}_\varepsilon||^2-||\dot{X}||^2)dt=\int^{\overline{t}+\varepsilon^\frac{3}{2}+\varepsilon}_{\overline{t}+\varepsilon^\frac{3}{2}}(-2\dot{x}(t)+1)dt\leq O(\varepsilon).$$

We conclude that $\Delta\mathcal{A}_\frac{1}{6}^+\leq-k_+\varepsilon^\frac{1}{2}+O(\varepsilon)$. Analogously, $\Delta\mathcal{A}_\frac{1}{6}^-\leq-k_-\varepsilon^\frac{1}{2}+O(\varepsilon)$, so $$\Delta\mathcal{A}_\frac{1}{6}\leq-(k_-+k_+)\varepsilon^\frac{1}{2}+O(\varepsilon),$$
and we have a contradiction.

Notice that if $\overline{t}=T/6$ we can define $f_\varepsilon(t)$ by
$$f_\varepsilon(t)=\left\{\begin{array}{cc}
0,& t\in [0,T/6-(\varepsilon^\frac{3}{2}+\varepsilon)]\\
\frac{\varepsilon+\varepsilon^\frac{3}{2}+(t-T/6)}{\varepsilon},& t\in[T/6-(\varepsilon^\frac{3}{2}+\varepsilon),T/6-\varepsilon^\frac{3}{2}]\\
1,& t\in[T/6-\varepsilon^\frac{3}{2},T/6]\\
\end{array},\right.$$
and repeat the argument above. We need to compute only $\Delta\mathcal{A}_\frac{1}{6}^-(\varepsilon)$. That finishes our prove.

\end{proof}

\section{Regularization and proof of the main theorem}

In the last section we proved that the collisions are only double collisions occurring in $\{t\in\mathbb{R}:\quad t=T/3\mathbb{Z}\}$. We will consider now $\dot{X}=(\dot{x},\dot{y},\dot{z})$ as independent variables in the phase space. So the hamiltonian 
$$H(X,\dot{X})=\frac{1}{2}(\dot{x}^2+\dot{y}^2+\dot{z}^2)-\left(\frac{1}{\sqrt{x^2+y^2}}+\frac{1}{\sqrt{x^2+z^2}}+\frac{1}{\sqrt{y^2+z^2}}\right)-\frac{1}{8}\left(\frac{1}{x}+\frac{1}{y}+\frac{1}{z}\right),$$
is singular only at double collisions. Now we will \textit{regularize} the equations of motions at the double collisions by a time scaling, using a kind of Levi-Civita regularization ~\cite{Levi-Civita}. Introduce the new variables $(\gamma,\upsilon,\zeta,\Gamma,\Upsilon, Z)$ defined by 
\begin{equation}\label{variables}
\left\{\begin{array}{cc}
x=\gamma^2, & \Gamma=2\dot{x}\gamma\\
y=\upsilon^2, & \Upsilon=2\dot{y}\upsilon\\
z=\zeta^2, & Z=2\dot{z}\zeta
\end{array}\right..
\end{equation} 
Notice that $(\theta,\Theta)=(\gamma,\upsilon,\zeta,\Gamma,\Upsilon,Z)\mapsto (X,\dot{X})=(x,y,z,\dot{x},\dot{y},\dot{z})$ is a symplectic $8$-fold covering outside collisions. Equations of motions in the new variables are still in hamiltonian form. The hamiltonian can be written as
$$H(\theta,\Theta)=\frac{1}{2}\left(\frac{\Gamma^2}{4\gamma^2}+\frac{\Upsilon^2}{4\upsilon^2}+\frac{Z^2}{4\zeta^2}\right)-\left(\frac{1}{\sqrt{\gamma^4+\upsilon^4}}+\frac{1}{\sqrt{\gamma^4+\zeta^4}}+\frac{1}{\sqrt{\upsilon^4+\zeta^4}}\right)-$$ $$-\frac{1}{8}\left(\frac{1}{\gamma^2}+\frac{1}{\upsilon^2}+\frac{1}{\zeta^2}\right),$$
and the equations of motion are 

\begin{equation}\label{eq antes reg}
\left\{\begin{array}{cccc}
\dot{\gamma}= & \frac{\Gamma}{4\gamma^2} & & \\
\dot{\Gamma}=& \left(\frac{\Gamma^2}{4\gamma^3} -\frac{1}{4\gamma^3}\right)-& \frac{2\gamma^3}{(\gamma^4+\upsilon^4)^\frac{3}{2}}&-\frac{2\gamma^3}{(\gamma^4+\zeta^4)^\frac{3}{2}}\\
\dot{\upsilon}= & \frac{\Upsilon}{4\upsilon^2} & & \\
\dot{\Upsilon}=& \left(\frac{\Upsilon^2}{4\upsilon^3} -\frac{1}{4\upsilon^3}\right)-& \frac{2\upsilon^3}{(\upsilon^4+\gamma^4)^\frac{3}{2}}&-\frac{2\upsilon^3}{(\upsilon^4+\zeta^4)^\frac{3}{2}}\\
\dot{\zeta}= & \frac{Z}{4\zeta^2} & & \\
\dot{Z}=& \left(\frac{Z^2}{4\zeta^3} -\frac{1}{4\zeta^3}\right)-& \frac{2\zeta^3}{(\zeta^4+\gamma^4)^\frac{3}{2}}&-\frac{2\zeta^3}{(\upsilon^4+\zeta^4)^\frac{3}{2}}
\end{array}\right..
\end{equation}

In order to regularize, we suppose that the total energy is constant, \textit{i.e.} $H(\theta,\Theta)=h$. So

\begin{align}
\left(\frac{\Gamma^2}{8\gamma^2}-\frac{1}{8\gamma^2}\right)=&h-\frac{1}{2}\left(\frac{\Upsilon^2}{4\upsilon^2}+\frac{Z^2}{4\zeta^2}\right)+\left(\frac{1}{\sqrt{\gamma^4+\upsilon^4}}+\frac{1}{\sqrt{\gamma^4+\zeta^4}}+\frac{1}{\sqrt{\upsilon^4+\zeta^4}}\right)\nonumber\\
&+\frac{1}{8}\left(\frac{1}{\upsilon^2}+\frac{1}{\zeta^2}\right)\nonumber
\end{align}

Define now 
$$\begin{aligned}
f(\gamma,\upsilon,\zeta,\Gamma,\Upsilon,Z)=&\frac{1}{2}\left(\frac{\Upsilon^2}{4\upsilon^2}+\frac{Z^2}{4\zeta^2}\right)-\left(\frac{1}{\sqrt{\gamma^4+\upsilon^4}}+\frac{1}{\sqrt{\gamma^4+\zeta^4}}+\frac{1}{\sqrt{\upsilon^4+\zeta^4}}\right)\\
&-\frac{1}{8}\left(\frac{1}{\upsilon^2}+\frac{1}{\zeta^2}\right).
\end{aligned} $$
Let us introduce the time scaling $$'=(\gamma^2\upsilon^2\zeta^2)\quad\dot{}\quad.$$
This time scaling will permit us to regularize all the doubles collisions at once.\\
The system $(\ref{eq antes reg})$ in the new time paramenter is
\begin{equation}\label{eq depois reg}
\left\{\begin{array}{cccc}
\gamma'= & \frac{1}{4}\upsilon^2\zeta^2\Gamma & & \\
\Gamma'=& 2\gamma\upsilon^2\zeta^2(h-f(\gamma,\upsilon,\zeta,\Gamma,\Upsilon,Z))-& \frac{2\gamma^5\upsilon^2\zeta^2}{(\gamma^4+\upsilon^4)^\frac{3}{2}}&-\frac{2\gamma^5\upsilon^2\zeta^2}{(\gamma^4+\zeta^4)^\frac{3}{2}}\\
\upsilon'= & \frac{1}{4}\gamma^2\zeta^2\Upsilon & & \\
\Upsilon'=& 2\upsilon\gamma^2\zeta^2(h-f(\upsilon,\gamma,\zeta,\Upsilon,\Gamma,Z))-& \frac{2\gamma^2\upsilon^5\zeta^2}{(\gamma^4+\upsilon^4)^\frac{3}{2}}&-\frac{2\gamma^2\upsilon^5\zeta^2}{(\upsilon^4+\zeta^4)^\frac{3}{2}}\\
\zeta'= & \frac{1}{4}\gamma^2\upsilon^2Z & & \\
Z'=& 2\zeta\gamma^2\upsilon^2(h-f(\zeta,\upsilon,\gamma,Z,\upsilon,\Gamma))-& \frac{2\gamma^2\upsilon^2\zeta^5}{(\gamma^4+\zeta^4)^\frac{3}{2}}&-\frac{2\gamma^2\upsilon^2\zeta^5}{(\upsilon^4+\zeta^4)^\frac{3}{2}}\\
\end{array}\right..
\end{equation}

\begin{defn} For a minimizer of $\mathcal{A}|_\Omega$,  $X$, a double collision at the instant $t=\overline{t}$ is regularized if and only if:
\begin{itemize}
\item [(i)] the total energy is constant;
\item [(ii)] For $t$ in a neighborhood of $\overline{t}$, the lift of $t\mapsto(X,\dot{X})(t)$ by the sympletic covering (\ref{variables}) is a solution of the equations (\ref{eq depois reg}) after time scaling.
\end{itemize}
\end{defn}
\begin{prop} \label{regularizacao} Double collisions of minimizers of $\mathcal{A}|_\Omega$ in the octahedral symmetric six-body problem are regularized.
\end{prop}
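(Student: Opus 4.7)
The plan is to verify the two conditions in the definition of ``regularized'' in turn. First I would establish the constancy of the total energy. By Proposition~\ref{without collisions}, on every collision-free open interval the minimizer $X$ satisfies the Euler--Lagrange equations, so $H(X(t),\dot X(t))$ is constant on that interval. The results of Section~\ref{collisions} confine the collision set of $X$ to $\tfrac{T}{3}\mathbb{Z}$, so the collision-free components are exactly the intervals $(kT/3,(k+1)T/3)$. The $G$-invariance of $X$ combined with the fact that $H$ is invariant under any permutation of $(x,y,z)$ forces $t\mapsto H(X(t),\dot X(t))$ to be $T/3$-periodic on the non-collision set, so all its constant values agree in a single number $h$. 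Continuity through a double collision, say $x(\overline t)=0$ with $y(\overline t),z(\overline t)>0$, follows from the fact that on the interval to the right of $\overline t$ the value $h$ already equals the sum of the continuous contributions of the $y,z$-terms and the limit of the Kepler piece $\tfrac12\dot x^2-\tfrac{1}{8x}$, whose finiteness is a direct consequence of Sundman's estimate~(\ref{Sundman}).

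For condition (ii) I would reduce, by the action of $G$, to a neighborhood of a collision at $\overline t$ with $x(\overline t)=0$ and $y(\overline t),z(\overline t)>0$. Sundman's estimate~(\ref{Sundman}) yields $x(t)=x_0|t-\overline t|^{2/3}+O(|t-\overline t|)$ and $\dot x(t)=\tfrac{2}{3}x_0\,\mathrm{sgn}(t-\overline t)|t-\overline t|^{-1/3}+O(1)$, while $y,z,\dot y,\dot z$ are of class $C^1$ through $\overline t$. Choosing a consistent branch $\gamma=\pm\sqrt{x}$ across the collision, one gets $\gamma(t)\sim\mathrm{sgn}(t-\overline t)\sqrt{x_0}\,|t-\overline t|^{1/3}$ and $\Gamma(t)=2\dot x\gamma\to\tfrac{4}{3}x_0^{3/2}$, so the lift $(\gamma,\upsilon,\zeta,\Gamma,\Upsilon,Z)$ extends continuously through $\overline t$. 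Moreover $dt/d\tau=\gamma^2\upsilon^2\zeta^2$ vanishes like $|t-\overline t|^{2/3}$ at $\overline t$, and integration gives $t-\overline t\sim c\,(\tau-\overline\tau)^{3}$ for some $c>0$, so the new time parameter $\tau$ is a strictly monotone function of $t$ which extends smoothly across the collision.

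To conclude I would verify that this extended lift solves the system~(\ref{eq depois reg}) on a full neighborhood of $\overline\tau$. Away from $\overline t$ this is a direct change of variables from the classical equations of motion given by Proposition~\ref{without collisions}, once the relation $H=h$ is used to replace the singular term $\tfrac{1}{8\gamma^2}$ appearing in $\dot\Gamma$ by the bounded quantity $2\gamma\upsilon^2\zeta^2(h-f)$; this is exactly the trade that leads from~(\ref{eq antes reg}) to~(\ref{eq depois reg}). In the chosen neighborhood $\upsilon$ and $\zeta$ stay bounded away from zero, hence the right-hand side of~(\ref{eq depois reg}) is locally Lipschitz at the extended point $(0,\upsilon(\overline t),\zeta(\overline t),\Gamma(\overline t),\Upsilon(\overline t),Z(\overline t))$. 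By continuity and local uniqueness of solutions to the regularized system, the lift of $X$ agrees with the unique solution through this initial datum on both sides of $\overline\tau$, and therefore satisfies~(\ref{eq depois reg}) also at $\overline\tau$.

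The main obstacle is the careful use of the energy constraint. The cancellation that turns the singular $\tfrac{1}{8\gamma^2}$ into the bounded $2\gamma\upsilon^2\zeta^2(h-f)$ is available only on the level set $\{H=h\}$, and it is precisely at the instant of collision that the singularity is attained; one must therefore ensure that the constant $h$ which works on each collision-free component is truly the \emph{same} across the collision. Pinning down this single value, and then exploiting it through the Hamiltonian constraint to produce a smooth regularized vector field, is the crux of the proof, while the remaining verifications reduce to standard continuity and ODE-uniqueness arguments.
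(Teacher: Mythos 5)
Your proposal is correct in substance but reaches the two conditions by a genuinely different route than the paper. For the constancy of the energy across the collision, you use the $G$-equivariance of the loop together with the permutation-invariance of $H$ to conclude that $t\mapsto H(X(t),\dot X(t))$ is $T/3$-periodic on the non-collision set, so the constants on the components $(kT/3,(k+1)T/3)$ coincide; the paper instead writes $h=h_x+K_0-U_0$ and invokes the absolute continuity of the cluster energy $h_x$ established in Proposition~\ref{extra double collision} to get continuity of $h$ through $\overline t=0$. Your periodicity argument is shorter and self-contained; the paper's argument is the one that generalizes to situations where the collision is not forced onto a symmetry orbit. For condition (ii), you extend the lift through the collision by computing the limits of $\gamma$ and $\Gamma=2\dot x\gamma$ from Sundman's estimates and then appeal to local Lipschitz continuity and uniqueness for the regularized vector field on both sides of $\overline\tau$; the paper instead continues the solution of (\ref{eq depois reg}) past $s=0$ using the regularity of the field and identifies the continuation with $X$ for $t<0$ via the reflection symmetry $h$ of the loop and of the equations. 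Your version is more local and does not use the time-reversal symmetry, which is a mild gain in generality.

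One sub-claim deserves a caveat: you assert that the finiteness of $\lim_{t\to\overline t}\bigl(\tfrac12\dot x^2-\tfrac{1}{8x}\bigr)$ is a ``direct consequence'' of the Sundman estimate (\ref{Sundman}). At the order stated there, the two leading $|t-\overline t|^{-2/3}$ terms cancel only because $x_0^3=9/16$, and the remaining cross term is merely $O(|t-\overline t|^{-1/3})$, which the stated expansion does not control. This is exactly the gap that the paper's Proposition~\ref{extra double collision} (absolute continuity of $h_x$) is designed to fill. In your write-up the point is not load-bearing, since the $G$-periodicity argument already pins down the single value of $h$, but if you want continuity of $h_x$ itself you should cite that proposition rather than the raw Sundman expansion.
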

\begin{proof}
By the symmetry of the problem and the invariance of $X$ under the action of $G$, we need to check only the collision at $\overline{t}=0$. Notice that our rescaling of time $\quad'=\gamma^2\upsilon^2\zeta^2\quad\dot{}\quad$ regularizes simultaneously all double collisions. 

We know that for a minimizer of $\mathcal{A}|_\Omega$ the total energy $h(t)=H(X(t),\dot{X}(t))$ is constant in each interval without collision, but in a small interval containing $\overline{t}=0$, a priori, the energy could be different for $t>0$ and $t<0$. We remark that
$$h(t)=h_x(t)+K_0-U_0(t),$$
where $K_0(X(t),\dot{X}(t))$ and $U_0(t)$ are the continuous function in a neighborhood of $\overline{t}$ defined in page \pageref{U_0 and K_0}. By proposition $\ref{extra double collision}$, $h_x$ is an absolutely continuous function in a neighborhood of a double collision on the $x$ axis. So, $t\mapsto h(t)$ is continuous in a neighborhood of $\overline{t}=0$. Therefore, is constant in $(-T/3,T/3)$, consequently in $[0,T]$.

The change of variables $\quad'=\gamma^2\upsilon^2\zeta^2\quad\dot{}\quad$ can be written as $\frac{d}{ds}=\gamma^2\upsilon^2\zeta^2\frac{d}{dt}$, where $\dot{}$ is the derivate with respect to the physical time, $t$, and $'$ is the derivate with respect to the \textit{new} (\textit{i.e.} rescaled) time $s$. The new time can be written as a function of the physical time $t$ as
$$s(t)=\int_0^t\frac{1}{x(u)y(u)z(u)}du,$$
and denote by $t=t(s)$ the inverse of $s=s(t)$. As we are studying a neighborhood that contains only a collision in the $x-$axis, we can use Sundman estimates and write:
$$\left\{\begin{array}{ccc}
x(t)=& x_0t^\frac{2}{3}&+O(t)\\
y(t)=& y_0 & +O(1)\\
z(t)=& z_0 & +O(1)
\end{array}
\right.\Longrightarrow x(t)y(t)z(t)=at^\frac{2}{3}+O(t),
$$
where $a=x_0y_0z_0>0$ is constant. Hence the integral above converges.

Let us term
$$\left\{\begin{array}{cc}
\gamma(s)=\sqrt{x(t(s))},& \Gamma(s)=2\dot{x}(t(s))\gamma(s)\\
\upsilon(s)=\sqrt{y(t(s))},& \Upsilon(s)=2\dot{y}(t(s))\upsilon(s)\\
\zeta(s)=\sqrt{z(t(s))},& Z(s)=2\dot{z}(t(s))\zeta(s)\\
\end{array}
\right.,$$
so the path
\begin{equation}\label{path}
s\longmapsto(\gamma(s),\Gamma(s),\upsilon(s),\Upsilon(s),\zeta(s),Z(s))
\end{equation}
is a solution of $(\ref{eq depois reg})$ in $(0,\epsilon)$, for $\epsilon$ suficiently small. Since the system $(\ref{eq depois reg})$ is regular at double collisions in each energy surface, the path $(\ref{path})$ can be continued to $s\leq0$. By the symmetry of the solution, $\dot{z}(0)=-\dot{y}(0)$ and $z(0)=y(0)$, hence we choose an appropriate lift: $\upsilon(0)=\zeta(0)$ and $\Upsilon(0)=-Z(0)$. Exploiting the symmetry of differential equations (\ref{eq depois reg}), we get
$$\begin{array}{cc}
\gamma(s)=-\gamma(-s), & \Gamma(s)=\Gamma(-s)\\
\upsilon(s)=\zeta(-s), & \Upsilon(s)=-Z(-s)\\
\zeta(s)=\upsilon(-s), & Z(s)=-\Upsilon(-s)
\end{array} .
$$
In physical coodinates $x,y,z$ and in physical time $t$ we get 
$$x(-t)=x(t)\quad,\quad y(-t)=z(t)\quad,\quad z(-t)=y(t).$$
This proves that for $t>0$, $X(t)$ coincides with the solution obtained through Levi-Civita regularization. For the others double collisions the argument is the same.
\end{proof}

Let $X=(x,y,z)$ be a minimizer of $\mathcal{A}|_\Omega$. By proposition $\ref{regularizacao}$ $X$ has double collisions regularized, and by proposition $\ref{without collisions}$, $X$ is a solution of the $6$-body problem. So we can say that $X$ is a \textit{collision solution} of the symmetric $6$-body problem on the axis. To prove the Main Theorem, we just have to prove $(ii)$, because $(i)$, $(iii)$ and $(iv)$ are already satisfied. Notice that, by the symmetry of the problem, we need to prove that $t\mapsto y(t)$ is decreasing and $t\mapsto z(t)$ is increasing in $[0, T/6]$ is the same thing as prove $t\mapsto x(t)$ is decreasing in $[2T/3,5T/6]$ and increasing in $[T/3,T/2]$, respectively. And, prove that $t\mapsto y(t)$ is decreasing and $t\mapsto z(t)$ is decreasing in $[T/6,T/3]$ is the same thing as proving that $t\mapsto x(t)$ is decreasing in $[5T/6,T]$ and decreasing in $[T/2,2T/3]$, respectively. So, we have just study the behavior of $t\mapsto x(t)$ in a period. By the equations of motion ($\ref{eq movi q}$), we have
\begin{equation}
\ddot{x}(t)=\frac{\partial U}{\partial x}<0 \quad \forall t\in[0,T].
\end{equation}
Therefore $x(t)=x(-t)$, so $\dot{x}(T/2)=0$. We conclude that $t\mapsto x(t)$ is increasing in $[0,T/2]$ and decreasing in $[T/2,T]$. And it finish the prove of the Main Theorem.

\vspace{1cm}

\textbf{Acknowledgment} I want to thanks professor Andrea Venturelli who proposed this problem and  strongly encouraged me to write this paper and  professor Eduardo Leandro for all his support. I wish to thank CAPES, for the pos-doc schollarship. \\

\textbf{Conflict of Interest:} The author has received research grants from CAPES (Coordena\c c\~ao de Aperfei\c coamento de Pessoal de N\'ivel Superior).

\end{document}